\def\eg{\emph{e.g.,}\xspace}
\def\ie{\emph{i.e.,}\xspace}
\def\resp{\emph{resp.}\xspace}
\def\wrt{\emph{w.r.t.}\xspace}
\def\A{\ensuremath{\mathcal{A}}\xspace}
\def\B{\ensuremath{\mathcal{B}}\xspace}
\def\cP{\mathcal{P}}
\def\Tb{T_{\mathrm{blue}}}
\def\ua{\underline{a}}
\def\ub{\underline{b}}
\def\uc{\underline{c}}
\def\ul{\underline{\ell}}
\def\Wt{W_{\mathrm{t}}}
\def\Wm{W_{\mathrm{m}}}
\newcommand{\cacher}[1]{}
\def\theta{\vartheta}
\newtheorem{property}{Property}
\newtheorem{theorem}{Theorem}
\newtheorem{proposition}[theorem]{Proposition}
\newtheorem{lemma}[theorem]{Lemma}
\newtheorem{claim}[theorem]{Claim}
\author[\'E. Fusy, G. Schaeffer]{\'Eric Fusy}
\address{\'E. Fusy, G. Schaeffer: LIX, \'Ecole Polytechnique, 91128 Palaiseau Cedex, France.}
\email{fusy@lix.polytechnique.fr, schaeffe@lix.polytechnique.fr}
\author[D. Poulalhon]{Dominique Poulalhon}
\address{D. Poulalhon: LIAFA, Universit\'e Paris Diderot,  case 7014, 75205 Paris Cedex 13, France.}  
\email{Dominique.Poulalhon@liafa.jussieu.fr}
\author{Gilles Schaeffer}
\title[Bijective counting of plane bipolar orientations and Schnyder woods]{Bijective counting of plane bipolar orientations\\ and Schnyder woods}
\begin{document}
%%%%%%%%%%%%%%%%%%%%%%%%%%%%%%%%%%%%%%%%%%%%%%%%%%%%%%%%%%%%%%%%%%%%%%%%%%%%
%%%%%%%%%%%%%%%%%%%%%%%%%%%%%%%%%%%%%%%%%%%%%%%%%%%%%%%%%%%%%%%%%%%%%%%%%%%%

\begin{abstract}
  A bijection $\Phi$ is presented between plane bipolar orientations
  with prescribed numbers of vertices and faces, and non-intersecting
  triples of upright lattice paths with prescribed extremities.
  This yields a combinatorial proof of the following formula due to
  R.~Baxter for the number $\Theta_{ij}$ of plane bipolar orientations with
  $i$ non-polar vertices and $j$ inner faces:
  \[
    \Theta_{ij} ~=~ 2 ~
    \frac{(i+j)!~(i+j+1)!~(i+j+2)!}{i!\;(i+1)!\;(i+2)! ~
    j!\;(j+1)!\;(j+2)!}.
  \]
  In addition, it is shown that $\Phi$ specializes into the bijection
  of Bernardi and Bonichon between Schnyder woods and non-crossing
  pairs of Dyck words.
  
  \emph{This is the extended and revised journal version of a
    conference paper with the title ``Bijective counting of plane
    bipolar orientations'', which appeared in Electr. Notes in
    Discr. Math. pp. 283-287 (proceedings of Eurocomb'07, 11-15
    September 2007, Sevilla).}
\end{abstract}

\maketitle
 
%%%%%%%%%%%%%%%%%%%%%%%%%%%%%%%%%%%%%%%%%%%%%%%%%%%%%%%%%%%%%%%%%%%%%%%%%%%%
\section{Introduction}
%%%%%%%%%%%%%%%%%%%%%%%%%%%%%%%%%%%%%%%%%%%%%%%%%%%%%%%%%%%%%%%%%%%%%%%%%%%%

A \emph{bipolar orientation} of a graph is an acyclic orientation of
its edges with a unique \emph{source} $s$ and a unique \emph{sink}
$t$, \ie such that $s$ is the only vertex without incoming edge, and
$t$ the only one without outgoing edge; the vertices $s$ and $t$ are
the \emph{poles} of the orientation. Alternative definitions,
characterizations, and several properties are given by De Fraysseix
\emph{et al} in~\cite{Oss}.  Bipolar orientations are a powerful
combinatorial structure and prove insightful to solve many algorithmic
problems such as planar graph embedding~\cite{Le66,Chiba} and
geometric representations of graphs in various flavours (\eg
visibility~\cite{TaTo}, floor planning~\cite{Kant}, straight-line
drawing~\cite{TaTo2,Fu06}).  Thus, it is an interesting issue to have a
better understanding of their combinatorial properties.

\smallskip

This article focuses on the enumeration of bipolar orientations in the
planar case: we consider bipolar orientations on planar maps, where a
\emph{planar map} is a connected graph embedded in the plane (i.e.,
drawn with no edge-intersection, the drawing being considered up to
isotopy).  A \emph{plane bipolar orientation} is a pair $(M,X)$, where
$M$ is a planar map and $X$ is a bipolar orientation of~$M$ having its
poles incident to the outer face of $M$, see Figure~\ref{fig:resume}.
Let $\Theta_{ij}$ be the number of plane bipolar orientations with $i$
non-pole vertices and $j$ inner faces. R.~Baxter proved in~\cite[Eq
  5.3]{baxter} that $\Theta_{ij}$ satisfies the following simple
formula:
\begin{equation}
  \label{eq:theta}
  \Theta_{ij} ~=~ 2 ~
  \frac{(i+j)!~(i+j+1)!~(i+j+2)!}{i!\;(i+1)!\;(i+2)! ~
  j!\;(j+1)!\;(j+2)!}.
\end{equation}
Nevertheless his methodology relies on quite technical algebraic
manipulations on generating functions, with the following steps: the
coefficients $\Theta_{ij}$ are shown to satisfy an explicit recurrence
(expressed with the help of additional ``catalytic" parameters), which
is translated to a functional equation on the associated generating
functions. Then, solving the recurrence requires to solve the
functional equation: Baxter guessed and checked the solution, while
more recently M.~Bousquet-M\'elou described a direct computation
way based on the so-called ``obstinate kernel
method''~\cite{bousquet-melou-four}.

\smallskip

The aim of this article is to give a direct bijective proof of
Formula~(\ref{eq:theta}).  Our main result, 
Theorem~\ref{theo:bijection}, is the description of a bijection
between plane bipolar orientations and certain triples of lattice
paths, illustrated in Figure~\ref{fig:resume}.

\begin{theorem}
  \label{theo:bijection}
  Plane bipolar orientations with $i$ non-pole vertices and $j$ inner
  faces are in bijection with non-intersecting triples of upright
  lattice paths on $\mathbb{Z}^2$ with respective origins $(-1,1)$,
  $(0,0)$, $(1,-1)$, and respective endpoints $(i-1,j+1)$, $(i,j)$,
  $(i+1,j-1)$.
%Formula~(\ref{eq:formul}) follows.
\end{theorem}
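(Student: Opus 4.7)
The plan is to prove Theorem~\ref{theo:bijection} by constructing the bijection $\Phi$ explicitly together with its inverse. Once $\Phi$ is established, one can deduce formula~(\ref{eq:theta}) as a consequence: the Lindstr\"om--Gessel--Viennot lemma enumerates non-intersecting triples of up-right lattice paths with the prescribed endpoints as a $3\times 3$ binomial determinant, and a routine algebraic evaluation of that determinant yields Baxter's formula, including the factor~$2$.

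The construction of $\Phi$ would proceed via a canonical left-to-right sweep of the plane bipolar orientation $(M,X)$ from source $s$ to sink $t$. The sweep maintains a frontier $\Gamma$---a directed path separating the already-processed region from the unprocessed one---and advances it one elementary step at a time. At each step either a non-pole vertex is processed (its incoming edges on $\Gamma$ are replaced by its outgoing edges, giving $i$ events in total) or an inner face is processed (its left boundary on $\Gamma$ is replaced by its right boundary, giving $j$ events in total). The three lattice paths of $\Phi(M,X)$ are then obtained by recording, at each of the $i+j$ events, a right step if the event is a vertex event and an up step otherwise, but starting from three different origins tracking three canonical markers on the initial frontier (a leftmost, a middle, and a rightmost one). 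The endpoints $(i-1,j+1)$, $(i,j)$, $(i+1,j-1)$ should then be read off directly from the positions of the three markers on the final frontier.

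To finish the bijection, I would describe the inverse $\Psi$ explicitly: reading the three paths in parallel, each simultaneous step determines whether to incorporate a new vertex or a new inner face into the partial map being built, and the relative offsets between the three paths at that moment encode the local structure (for instance the in-degree of the new vertex, or the length of the absorbed face boundary). The non-intersection property of the triple produced by $\Phi$ would follow from the observation that the three markers stay in strict left-to-right order on $\Gamma$ throughout the sweep, and this ordering is transferred directly to the recorded coordinates.

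The main obstacle, I expect, is to choose the markers (and hence the three origins and the encoding rules) so that the non-intersection condition on the image triple corresponds \emph{exactly} to the feasibility constraints needed for $\Psi$ to produce a valid plane bipolar orientation at every stage. In particular, one must verify both that no ``forbidden'' configuration (\eg a vertex event with no incoming edges available on $\Gamma$) can occur in a non-intersecting triple, and that no ambiguity arises when decoding. Carrying this out will probably require a detailed case analysis of the possible local frontier updates, together with an induction on $i+j$ to maintain the correspondence between intermediate states of the sweep and prefixes of the triple of paths.
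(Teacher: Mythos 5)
Your proposal takes a genuinely different route from the paper, but as written it contains a gap that is not merely a matter of detail. The paper never sweeps the bipolar orientation directly: it first transfers the orientation to a \emph{separating decomposition} of the associated quadrangulation (Proposition~\ref{theo:bi}), then reads a single contour word of the blue spanning tree (recording tree-edge traversals and red-edge crossings), extracts from it the three subwords $W_a$, $W_b$, $W_c$, proves two prefix inequalities (Properties~\ref{property1} and~\ref{property2}), and finally observes that those inequalities are exactly the non-intersection of the three associated lattice paths. The inverse is built by reconstructing the tree from a Dyck word and closing faces one at a time. Your frontier-sweep idea is closer in spirit to Baxter's original catalytic recurrence and to the bijection of Felsner et al., and it is a legitimate alternative starting point.

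The concrete problem is this: you state that each of the three paths records ``a right step if the event is a vertex event and an up step otherwise'' at each of the $i+j$ events. If all three paths record the same step sequence, they are translates of a single path from $(0,0)$ to $(i,j)$, of which there are only $\binom{i+j}{i}$ --- far fewer than $\Theta_{ij}$ --- so the map cannot be injective, and the ``three markers'' contribute nothing beyond fixed origins. The information that must be carried (in the paper, the in-degrees along the tree and the way the red edges shuffle into it; in your language, the in-degree of each processed vertex and the boundary lengths of each processed face) has to make the three step sequences genuinely different from one another, and you never say how. You acknowledge the remaining work yourself (``the main obstacle, I expect, is to choose the markers\dots{} carrying this out will probably require a detailed case analysis''), but that obstacle is precisely where the content of the theorem lives: the choice of encoding and the proof that non-intersection is equivalent to decodability are the whole proof, and they are absent. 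As it stands the proposal is a research plan, not a proof.
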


\begin{figure}
  \centering
  \includegraphics[width=8cm]{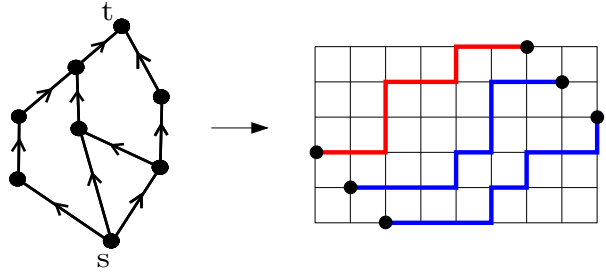}
  \caption{A plane bipolar orientation and the associated triple of
    non-intersecting upright lattice paths.}\label{fig:resume}
\end{figure}

This constitutes a proof of Formula~(\ref{eq:theta}), since the
latter is easily derived from Theorem~\ref{theo:bijection} using the
Gessel--Viennot Lemma~\cite{GeVi1,GeVi2}:

\begin{lemma}[Gessel--Viennot]
  Let $k$ be a positive integer, $\A = \{A_1,\ldots,A_k\}$ and $\B
  = \{B_1,\ldots,B_k\}$ be two sets of points on the $\mathbb{Z}^2$
  lattice, such that any $k$-tuple of non-intersecting upright lattice
  paths with starting points in \A and endpoints in \B necessarily
  join together $A_p$ and $B_p$ for any index~$p$. Then the number of
  such $k$-tuples~is:
  \[
  \Theta=\mathrm{Det}(M),
  \]
  where $M$ is the $k\times k$ matrix such that $M_{pq}$
  is the number of upright lattice paths from $A_p$ to~$B_q$.
\end{lemma}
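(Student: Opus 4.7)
The plan is to follow the classical sign-reversing involution strategy. First I would expand the determinant over the symmetric group:
\[
\mathrm{Det}(M) = \sum_{\sigma \in S_k} \mathrm{sgn}(\sigma) \prod_{p=1}^k M_{p,\sigma(p)} = \sum_{\sigma \in S_k} \mathrm{sgn}(\sigma)\, N_\sigma,
\]
where $N_\sigma$ counts $k$-tuples $(P_1,\ldots,P_k)$ of upright lattice paths such that $P_p$ runs from $A_p$ to $B_{\sigma(p)}$. Thus $\mathrm{Det}(M)$ is a signed enumeration of \emph{all} such path systems, intersecting or not, indexed by their associated permutation.

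Next I would isolate the non-intersecting systems. The hypothesis of the lemma is precisely designed to force every non-intersecting $k$-tuple with sources in \A and sinks in \B to have associated permutation $\sigma = \mathrm{id}$. So the non-intersecting systems contribute exactly $+\Theta$ to the sum, and the proof reduces to showing that the intersecting systems contribute $0$, regardless of their associated permutations.

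The heart of the argument is to construct a sign-reversing involution $\iota$ on the set $\mathcal{I}$ of intersecting path systems. Given $(P_1,\ldots,P_k) \in \mathcal{I}$ with associated permutation $\sigma$, I would let $p$ be the smallest index such that $P_p$ shares a lattice vertex with some other path, let $v$ be the first such vertex met when traversing $P_p$ from $A_p$, and let $q$ be the smallest index strictly greater than $p$ with $v \in P_q$. Then $\iota$ swaps the portions of $P_p$ and $P_q$ strictly after $v$: the new path $P'_p$ ends at $B_{\sigma(q)}$ and $P'_q$ ends at $B_{\sigma(p)}$, so the new permutation is $\sigma \circ (p\,q)$ and its sign is reversed. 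Because two upright lattice paths that meet necessarily share a lattice vertex, the swap is well-defined on the discrete structure.

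The main obstacle, and essentially the only nontrivial check, is to verify that $\iota \circ \iota = \mathrm{id}$. This amounts to showing that the triple $(p,q,v)$ picked out by the selection rules from $\iota(P_1,\ldots,P_k)$ coincides with the one picked out from the original tuple. Since the swap leaves all paths $P_r$ with $r \notin \{p,q\}$ untouched and preserves the initial segments of $P_p$ and $P_q$ up to and including $v$, the criteria ``smallest index whose path meets another'' and ``first meeting vertex along that path'' return the same $p$ and $v$; the canonical tiebreaker on $q$ then returns the same $q$. Once this is established, intersecting systems cancel in sign-reversing pairs, leaving $\mathrm{Det}(M) = \Theta$.
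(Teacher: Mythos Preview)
Your argument is the standard Lindstr\"om--Gessel--Viennot sign-reversing involution, and it is correct as written; in particular your verification that the selection rule $(p,v,q)$ is stable under the tail swap is the crucial point and you handle it properly. Note, however, that the paper does not give its own proof of this lemma: it is quoted as a classical tool with references~\cite{GeVi1,GeVi2}, so there is no in-paper proof to compare against. Your write-up is essentially the proof one finds in those references.
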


By Theorem~\ref{theo:bijection}, $\Theta_{ij}$ is equal to 
the number of triples of non-intersecting lattice paths from
$A_1=(-1,1),A_2=(0,0),A_3=(1,-1)$ to
$B_1=(i-1,j+1),B_2=(i,j),B_3=(i+1,j-1)$.  Hence,

\[
\Theta_{ij} ~=~ \begin{array}{|ccc|}
\dbinom{j+i}{i} & \dbinom{j+i}{i+1} & \dbinom{j+i}{i+2} \\&&\\
\dbinom{j+i}{i-1}  & \dbinom{j+i}{i} & \dbinom{j+i}{i+1} \\&&\\
\dbinom{j+i}{i-2}  & \dbinom{j+i}{i-1}  & \dbinom{j+i}{i} 
\end{array}
~=~ \frac{2 \;\; (i+j)! \;\; (i+j+1)! \;\; (i+j+2)!}{i! \; (i+1)! \; (i+2)! \;\;
  j! \; (j+1)!\; (j+2)!}.
\]

\medskip

The second main result of this paper is to show that our bijection
extends in a natural way a bijection that has been recently described
by Bernardi and Bonichon~\cite{BeBo07} (which itself reformulates an
original construction due to Bonichon~\cite{B02}) to count another
well-known and powerful combinatorial structure related to planar
maps, namely Schnyder woods on triangulations
\cite[Chapter~2]{FeBook}. Actually our construction draws much of its
inspiration from the one in~\cite{BeBo07}. We recover the
correspondence between these Schnyder woods and non-crossing pairs of
Dyck paths, which easily yields the formula
\begin{equation}
  S_n ~=~ C_nC_{n+2}\;-\;C_{n+1}^2 ~=~
  \frac{6 \;\; (2n)! \;\; (2n+2)!}{n! \; (n+1)! \; (n+2)! \; (n+3)!}
\end{equation}
for the number $S_n$ of Schnyder woods on triangulations with $n$
inner vertices (where $C_n$ denotes the $n$th Catalan
number $(2n)!/(n!(n+1)!)$).

%%%%%%%%%%%%%%%%%%%%%%%%%%%%%%%%%%%%%%%%%%%%%%%%%%%%%%%%%%%%%%%%%%%%%%%%%%%%
\subsubsection*{Recent related work.} 
%%%%%%%%%%%%%%%%%%%%%%%%%%%%%%%%%%%%%%%%%%%%%%%%%%%%%%%%%%%%%%%%%%%%%%%%%%%%
Felsner et al~\cite{FeFuNoOr07} have very recently exhibited a whole
collection of combinatorial structures that are bijectively related
with one another, among which plane bipolar orientations, separating
decompositions on quadrangulations, Baxter permutations, and triples
of non-intersecting paths.  Though very close in spirit, our bijection
is not equivalent to the one exhibited in~\cite{FeFuNoOr07}.
%, which treats the two trees of a separating decomposition in a
%symmetric way. Another difference is that 
In particular, the restriction of this bijection %in~\cite{FeFuNoOr07}
to count Schnyder woods is a bit more involved than our one and is not
equivalent to the bijection of Bernardi and Bonichon~\cite{BeBo07}.

Even more recently, Bonichon et al~\cite{BoBoFu08} have described a
simple and direct bijection between plane bipolar orientations and
Baxter permutations. These Baxter permutations are known to be encoded
by non-intersecting triples of lattice paths since work by Dulucq and
Guibert~\cite{DuGu98}.  Combining the bijections in~\cite{BoBoFu08}
and~\cite{DuGu98} leads to yet another bijection (almost equivalent to
the one in~\cite{FeFuNoOr07}) between plane bipolar orientation and
non-intersecting triple of paths.

%%%%%%%%%%%%%%%%%%%%%%%%%%%%%%%%%%%%%%%%%%%%%%%%%%%%%%%%%%%%%%%%%%%%%%%%%%%%
\subsubsection*{The main steps to encode a plane bipolar orientation
by a non-intersecting triple of paths.} 
%%%%%%%%%%%%%%%%%%%%%%%%%%%%%%%%%%%%%%%%%%%%%%%%%%%%%%%%%%%%%%%%%%%%%%%%%%%%
At first (Section~\ref{sec:reduce}), we recall a well-known bijective
correspondence between plane bipolar orientations and certain
decompositions of quadrangulations into two spanning trees, which are
called \emph{separating decompositions}.  The next step
(Section~\ref{sec:triple}) is to encode such a separating
decomposition by a triple of words with some prefix conditions: the
first two words encode one of the two trees $T$, in a slight variation
on well known previous results for the 2-parameter enumeration of
plane trees or binary trees (counted by the so-called Narayana numbers).
%, which often leads to non-intersecting pairs of lattice paths and
%the so-called Narayana numbers, see~\cite[A001263]{Sloane}.
The third word encodes the way the edges of the other tree shuffle in
the tree $T$.  The last step (Section~\ref{sec:representation}) of the
bijection is to represent the triple of words as a triple of upright
lattice paths, on which the prefix conditions translate into a
non-intersecting property.

%%%%%%%%%%%%%%%%%%%%%%%%%%%%%%%%%%%%%%%%%%%%%%%%%%%%%%%%%%%%%%%%%%%%%%%%%%%%
\section{Reduction to counting separating decompositions on quadrangulations}
\label{sec:reduce}
%%%%%%%%%%%%%%%%%%%%%%%%%%%%%%%%%%%%%%%%%%%%%%%%%%%%%%%%%%%%%%%%%%%%%%%%%%%%
A \emph{quadrangulation} is a planar map with no loop nor multiple
edge and such that all faces have degree~4. Such maps correspond to
\emph{maximal} bipartite planar maps, \ie bipartite planar maps that
would not stay bipartite or planar if an edge were added between two of
their vertices.

Let $O=(M,X)$ be a plane bipolar orientation; the
\emph{quadrangulation} $Q$ of $M$ is the bipartite map obtained as
follows: say vertices of $M$ are black, and put a white vertex in each
face of $M$; it proves convenient in this particular context to define
a special treatment for the outer face, and put two white vertices in
it, one on the left side and one on the right side of $M$ when the
source and sink are drawn at the bottom and at the top, respectively. 
These black and white
vertices are the vertices of $Q$, and the edges of $Q$ correspond to
the incidences between vertices and faces of $M$. This construction,
which can be traced back to Brown and Tutte~\cite{BT64}, is
illustrated in Figure~\ref{fig:tutte}.  It is well known that $Q$ is
indeed a quadrangulation: to each edge $e$ of $M$ corresponds an inner
(\ie bounded) face of $Q$ (the unique one containing $e$ in its
interior), and our particular treatment of the outer face also
produces a quadrangle.

\begin{figure}
  \centering
  \includegraphics[width=12cm]{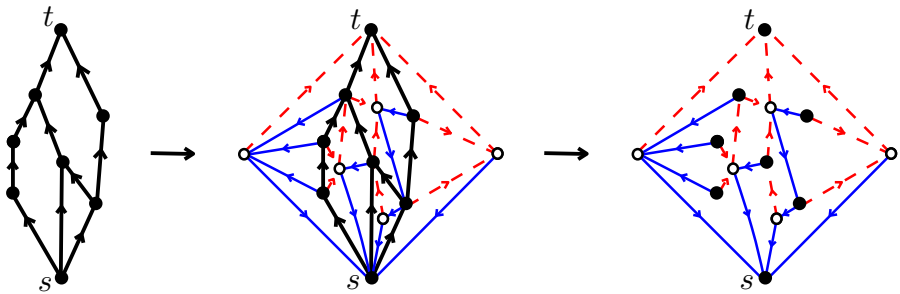}
  \caption{From a plane bipolar orientation to 
    a separating decomposition.}
  \label{fig:tutte}
\end{figure}

If $M$ is endowed with a bipolar orientation $O$, this classical
construction can be enriched to transfer the orientation on~$Q$, as
shown in Figure~\ref{fig:tutte}.  Notice that $O$ (or, in general, any
plane bipolar orientation) satisfies the two following local
conditions~\cite{DeOss} illustrated in Figure~\ref{fig:local_rules_a},
as easily proved using the acyclicity of the orientation and the
Jordan curve theorem:
\begin{itemize}
\item
  edges incident to a non-pole vertex are partitioned into a non-empty
  block of incoming edges and a non-empty block of outgoing edges,
\item
  dually, the contour of each inner face $f$ consists of two oriented
  paths (one path has $f$ on its left, the other one has $f$ on its
  right); the common extremities of the paths are called the two
  \emph{extremal vertices} of $f$. 
 %% the one with $f$ on its right
 %% (left) is called the \emph{left lateral path} (\emph{right lateral
 %% path, \resp}) of $f$ two poles~\cite{DeOss};
\end{itemize}

\begin{figure}
  \psset{unit=2.1em}
  \centering 
  \subfigure[in a plane bipolar orientation,\label{fig:local_rules_a}]{%
    \pspicture(9,4)
    \rput[bl](0,0){\includegraphics[width=9\psunit]{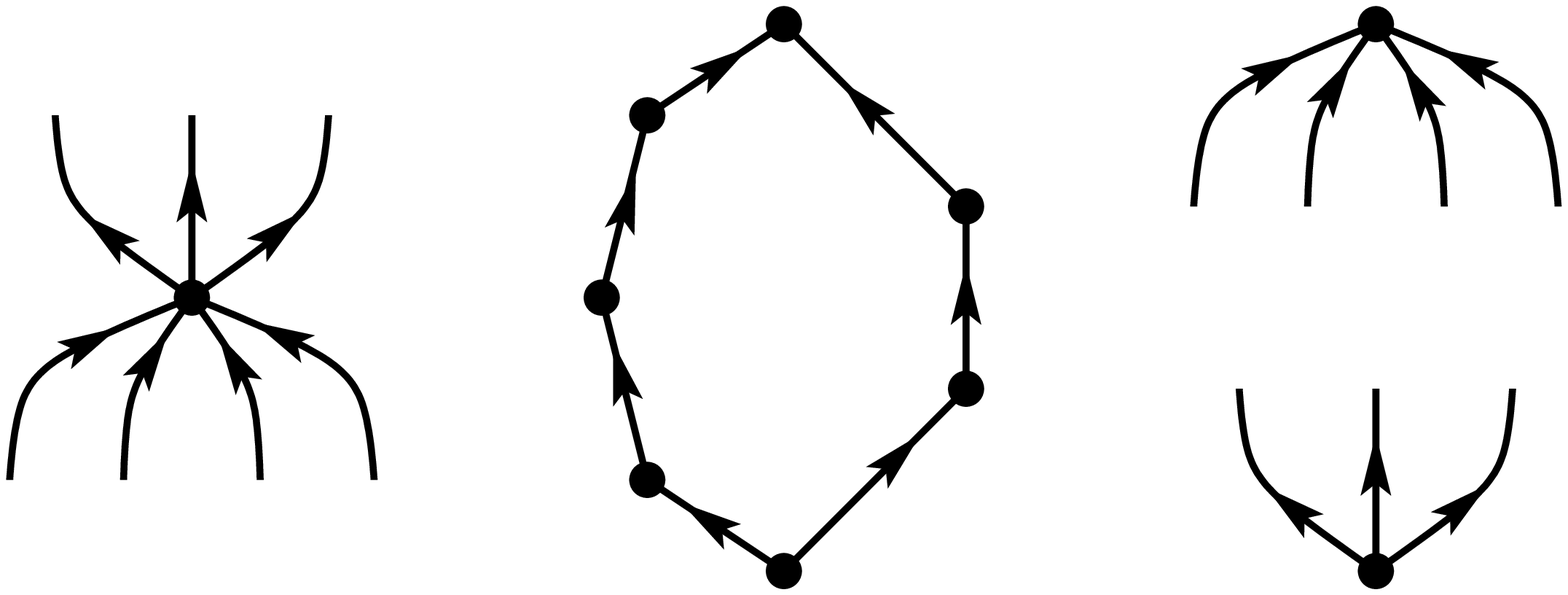}}
    \rput[b](8,0){$s$}\rput[t](8,4){$t$}
    \endpspicture}
  \quad\vline\quad
  \subfigure[and in a separating decomposition.\label{fig:local_rules_b}]{%
    \pspicture(10,4)
    \rput[bl](0,0){\includegraphics[width=10\psunit]{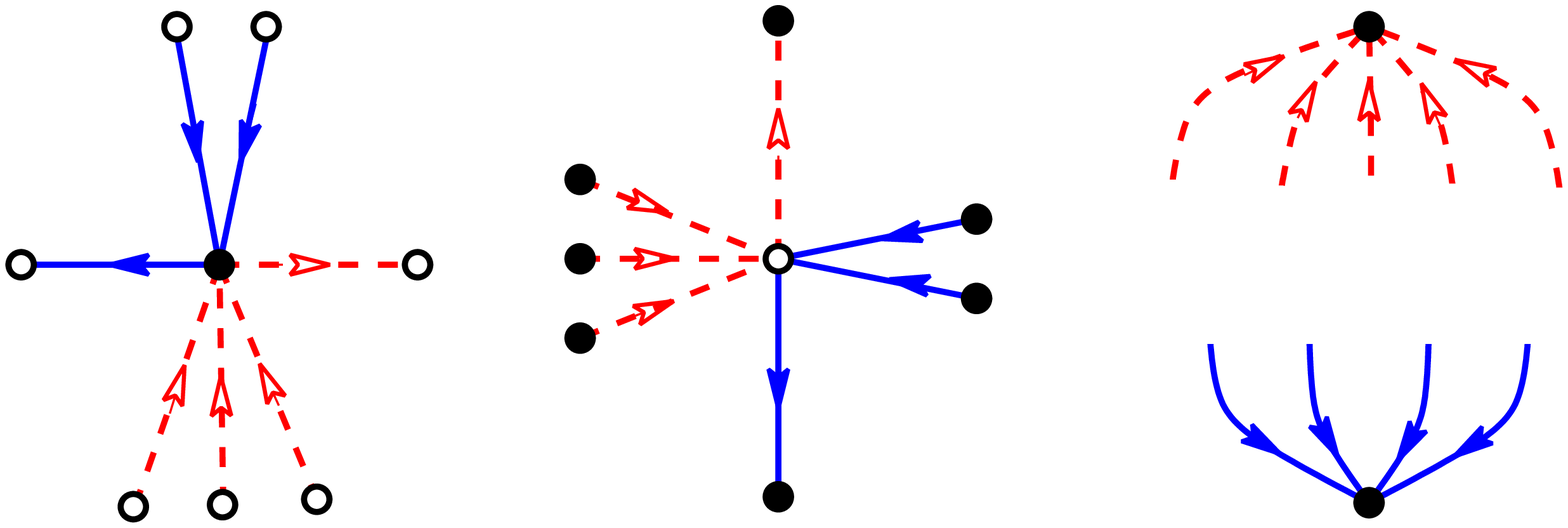}}
    \rput[b](9,0){$s$}\rput[t](9,4){$t$}
    \endpspicture}
  \caption{The local rules.}
\end{figure}

%% Consider a quadrangulation $Q$ bicolored at its vertices (vertices are
%% black or white and each edge connects a black vertex to a white
%% vertex), where the two outer black vertices
%% are denoted $s$ and $t$.

A \emph{separating decomposition} of $Q$ is an orientation and
bicoloration of its edges, say in red or blue, that satisfy the
following local conditions illustrated in
Figure~\ref{fig:local_rules_b} (in all figures, red edges are dashed):
\begin{itemize}
\item each inner vertex has exactly two outgoing edges, a red one and
  a blue one;
\item around each inner black (white, \resp) vertex, the incoming edges
  in each color follow the outgoing one in clockwise (counterclockwise, \resp) order;
\item all edges incident to $s$ are incoming blue, and all edges
  incident to $t$ are incoming red.
\end{itemize}

%\begin{figure}[bt]
%  \centering \includegraphics[width=.5\linewidth]{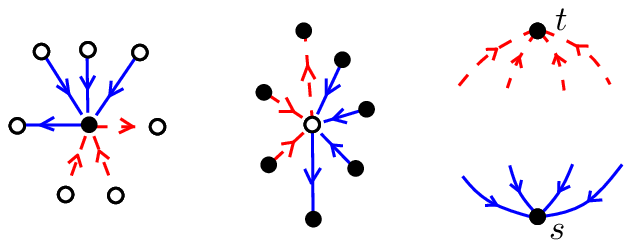}
%  \caption{The local rules in a separating decomposition.}
%  \label{fig:local_rules_b}
%end{figure}

Given an inner face $f$ of $M$, let us orient the two corresponding
edges of $Q$ from the white vertex $w_f$ corresponding to $f$ to the
extremal vertices of $f$, and color respectively in red and blue the
up- and the down-edges. The other edges incident to $w_f$ are oriented
and colored so as to satisfy the circular order condition
around~$w_f$. This defines actually a separating decomposition of $Q$,
and this mapping from plane bipolar orientations to separating
decompositions is one-to-one, as proved by an easy extension
of~\cite[Theorem 5.3]{DeOss}: %%(see also~\cite{FeFuNoOr}):

\begin{proposition}
  \label{theo:bi}
  Plane bipolar orientations with $i$ non-pole vertices and $j$ inner
  faces are in bijection with separating decompositions on
  quadrangulations with $i+2$ black vertices and $j+2$ white vertices.
\end{proposition}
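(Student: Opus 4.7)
The forward map $\Phi:(M,X)\mapsto (Q,S)$ is already described in the paragraph preceding the statement. My plan is to verify that $\Phi$ indeed produces a separating decomposition, then to exhibit an explicit inverse $\Psi$ and argue that $\Psi\circ\Phi$ and $\Phi\circ\Psi$ are the identities. The parameter count is automatic from the Brown--Tutte construction: $i$ non-pole vertices produce $i+2$ black vertices (adding $s$ and $t$), and $j$ inner faces produce $j+2$ white vertices (adding the two white vertices placed in the outer face).

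\textbf{Forward verification.} I would check the three local conditions defining a separating decomposition one by one. At an inner white vertex $w_f$, the two outgoing edges are, by construction, those pointing to the two extremal vertices of $f$, one coloured red (the upward one) and the other blue (the downward one); the remaining edges of $Q$ at $w_f$ correspond to the interior vertices of the left and right oriented paths bounding $f$, and their orientations and colours are uniquely forced by the counter-clockwise rule around $w_f$, the choice being consistent precisely because the two bounding paths are each directed from the bottom to the top extremal vertex of $f$. At an inner black vertex $v$, the bipolar local rule of Figure~\ref{fig:local_rules_a} (a block of outgoing edges followed by a block of incoming edges) translates, after inserting a white vertex in each face of $M$ incident to $v$ and applying the now-verified rule at each such $w_f$, exactly into the required clockwise alternation of four blocks around $v$ in $Q$, with a single outgoing-red and single outgoing-blue edge. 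Finally, as $s$ is the source of $X$, every face of $M$ incident to $s$ has $s$ as its bottom extremal vertex, so the blue outgoing edge of the adjacent white vertex lands on $s$; together with the symmetric fact for the two outer white vertices, this gives the pole condition at $s$, and dually at $t$.

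\textbf{Inverse and main obstacle.} Given a separating decomposition $(Q,S)$, the inverse $\Psi$ takes as vertex set of $M$ the black vertices of $Q$ and, for each inner quadrangular face $F=uwvw'$ of $Q$ with $u,v$ black and $w,w'$ white, draws an edge inside $F$ joining $u$ and $v$, oriented from the endpoint hit by the outgoing blue edge at $w$ towards the endpoint hit by the outgoing red edge at $w$; the cyclic colour rule at $w'$ assigns the same orientation, so the map is well defined. That this produces a plane map $M$ whose inner faces are indexed by the inner white vertices of $Q$ is a direct incidence count. The main obstacle is verifying that the resulting orientation is bipolar: the source/sink property at $s$ and $t$ is immediate from the pole conditions on $S$, but acyclicity requires real work. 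The idea is that any directed cycle $C$ in $M$ would, via the outgoing-edge rule at each black vertex along $C$, force a directed cycle in either the red or the blue out-tree of $S$, contradicting the tree structure forced by the two-outgoing-edges condition together with the pole conditions; this is exactly the content of~\cite[Theorem~5.3]{DeOss}, which I would invoke (or reprove by a short Jordan-curve argument) to conclude. The inverse relations $\Psi\circ\Phi=\mathrm{id}$ and $\Phi\circ\Psi=\mathrm{id}$ are then visible on the figures, as both constructions read off the same incidences of black and white vertices to quadrangular faces with mutually inverse encodings of the orientations and colours.
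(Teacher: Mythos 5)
Your proposal is correct and follows essentially the same route as the paper: the same Brown--Tutte quadrangulation construction with the up-/down-edges at each face vertex coloured red/blue, the same parameter count, and the same deferral of the hardest point (that the inverse yields an acyclic orientation, hence a genuine bijection) to Theorem~5.3 of~\cite{DeOss}, which is precisely the reference the paper invokes. The extra detail you supply on the local verifications and the explicit inverse is consistent with, and merely fleshes out, the paper's argument.
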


Accordingly, encoding plane bipolar orientations \wrt the numbers of
vertices and faces is equivalent to encoding separating decompositions
\wrt the numbers of black and white vertices.

%%%%%%%%%%%%%%%%%%%%%%%%%%%%%%%%%%%%%%%%%%%%%%%%%%%%%%%%%%%%%%%%%%%%%%%%%%%%
\section{Encoding a separating decomposition by a triple of
  non-intersecting paths}
\label{sec:encode}
%%%%%%%%%%%%%%%%%%%%%%%%%%%%%%%%%%%%%%%%%%%%%%%%%%%%%%%%%%%%%%%%%%%%%%%%%%%%
Separating decompositions have an interesting property: as shown
in~\cite{Bi,Hu}, blue edges form a tree spanning all vertices but $t$,
and red edges form a tree spanning all vertices but $s$. Moreover,
the orientation of the edges corresponds to the natural orientation
toward the root in both trees (the root is $s$ for the blue tree and $t$ 
for the red tree).

%%%%%%%%%%%%%%%%%%%%%%%%%%%%%%%%%%%%%%%%%%%%%%%%%%%%%%%%%%%%%%%%%%%%%%%%%%%%
\subsection{From a separating decomposition to a triple of words}\label{sec:triple}
%%%%%%%%%%%%%%%%%%%%%%%%%%%%%%%%%%%%%%%%%%%%%%%%%%%%%%%%%%%%%%%%%%%%%%%%%%%%
Let $D$ be a separating decomposition with $i+2$ black vertices and
$j+2$ white vertices, and let $\Tb$ be its blue tree.  A
\emph{clockwise} (or shortly \emph{cw}) \emph{traversal} of a tree is
a walk around the tree with the outer face on the left.  We define
the \emph{contour word} $W_Q$ of $Q$ as the word on the alphabet
$\{a,\ua,b,\ub,c,\uc\}$ that encodes the clockwise traversal of $\Tb$
starting at $s$ in the following manner
%and writing letters as follows 
(see Figure~\ref{fig:contour}): letter $a$ ($b$, \resp) codes the
traversal of an edge $e$ of $\Tb$ from a black to a white vertex (from
a white to a black one, \resp), and the letter is underlined if it
corresponds to the second traversal of $e$; letter $c$ codes the
crossing of red edge at a white vertex, and is underlined it if the
edge is incoming.

\begin{figure}
  \small
  \psset{angle=-90, nodesep=1pt}
  \def\a(#1,#2){\rput[b](#1,#2){$a$}}
  \def\A(#1,#2){\rput[b](#1,#2){$\ua$}}
  \def\b(#1,#2){\rput[b](#1,#2){$b$}}
  \def\B(#1,#2){\rput[b](#1,#2){$\ub$}}
  \def\c(#1,#2){\rput[b](#1,#2){$c$}}
  \def\C(#1,#2){\rput[b](#1,#2){$\uc$}}
  \def\Wa{\a(11,0.5)\A(5.5,12)\A(5,8.5)\A(6.5,5)\a(12,1.5)\A(14,9.5)\a(13.5,2)\a(19,11)\A(21.5,12.5)\A(20,8)\a(15.5,2)}
  \def\Wb{\b(1.5,9)\b(2.5,8.5)\b(3,7)\B(4,4.75)\b(11.5,10)\B(11.5,7)\b(17.5,7)\b(19,15)\B(19.5,13)\B(17,4)\B(25.5,7)}
  \def\Wc{\c(0.5,8)\C(10,7)\C(9.5,8)\c(10,9.5)\c(16,6.5)\C(17.25,13)\C(16,14)\c(17,15.5)\C(24,8)\C(24.5,9.5)\c(26.5,10)}
  \def\WQ{\rput[t](13.5,-1){$W_Q = ac b\ua b\ua b\ua \ub a \uc\uc c b\ua \ub acba\uc\uc cb\ua\ub\ua\ub a\uc\uc c\ub$}}
  \def\Wt{\rput[t](13.5,-1){$W_t = a b\ua b\ua b\ua \ub ab\ua \ub abab\ua\ub\ua\ub a\ub$}}
  \def\Wm{\rput[t](13.5,-1){$W_m = ac \rnode{p1}{\ua} \rnode{p2}{\ua} \rnode{p3}{\ua} a%
    \rnode{q3}{\uc}\rnode{q2}{\uc} c \rnode{p4}{\ua}%
    aca\rnode{q4}{\uc}\rnode{q1}{\uc} c\rnode{p5}{\ua}\rnode{p6}{\ua}%
    a\rnode{q6}{\uc}\rnode{q5}{\uc} c$}%
    \ncbar[arm=6pt]{p1}{q1}\ncbar[arm=4pt]{p2}{q2}\ncbar[arm=2pt]{p3}{q3}%
    \ncbar[arm=2pt]{p4}{q4}\ncbar[arm=4pt]{p5}{q5}\ncbar[arm=2pt]{p6}{q6}%
}
  \def\complet{\pspicture(0,-2)(27,23)%
    \rput[bl](0,0){\includegraphics[width=27\psunit]{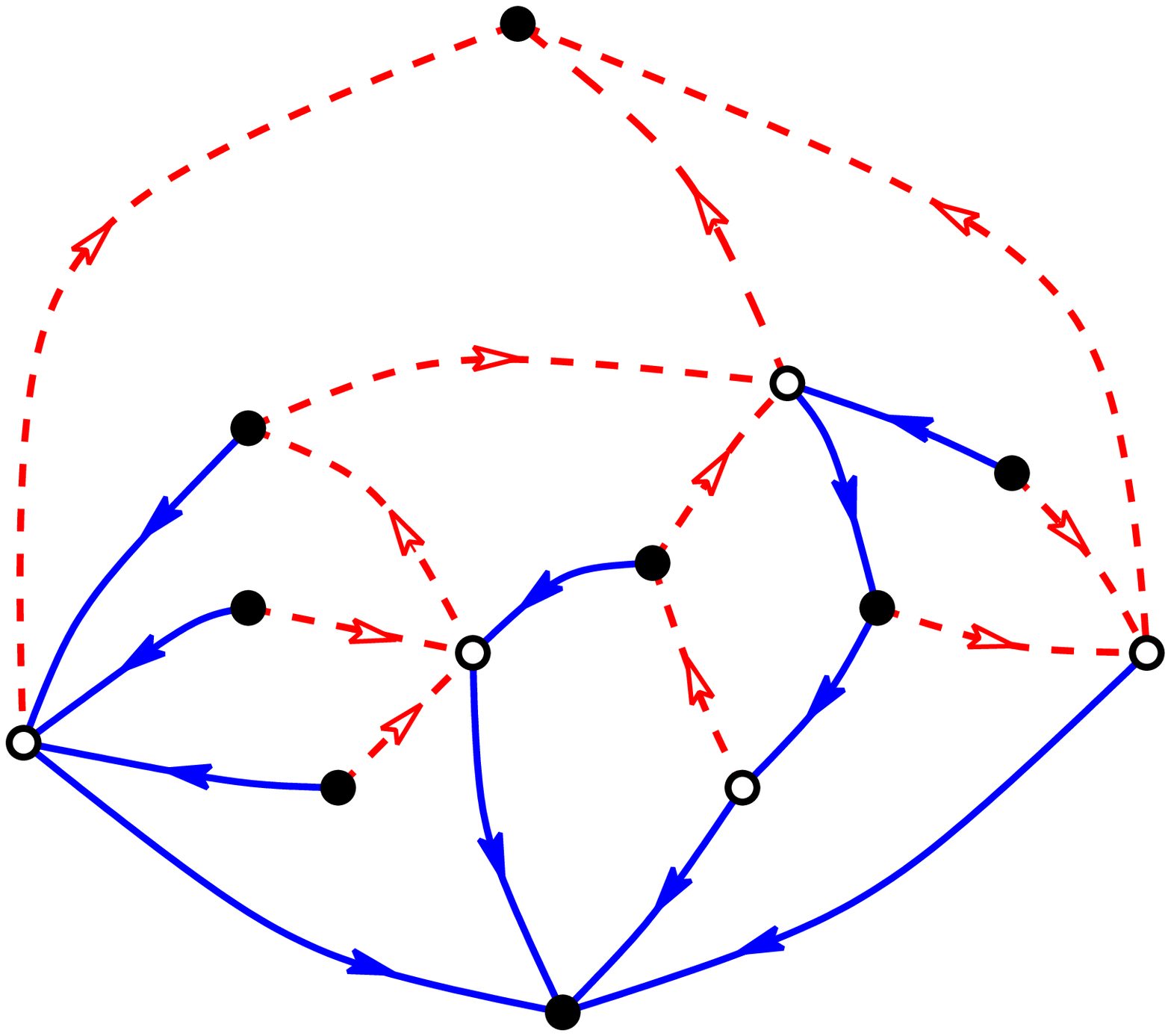}}%
    \Wa\Wb\Wc\WQ%
    \endpspicture}
  \def\bleu{\pspicture(0,-2)(27,16)%
    \rput[bl](0,0){\includegraphics[width=27\psunit]{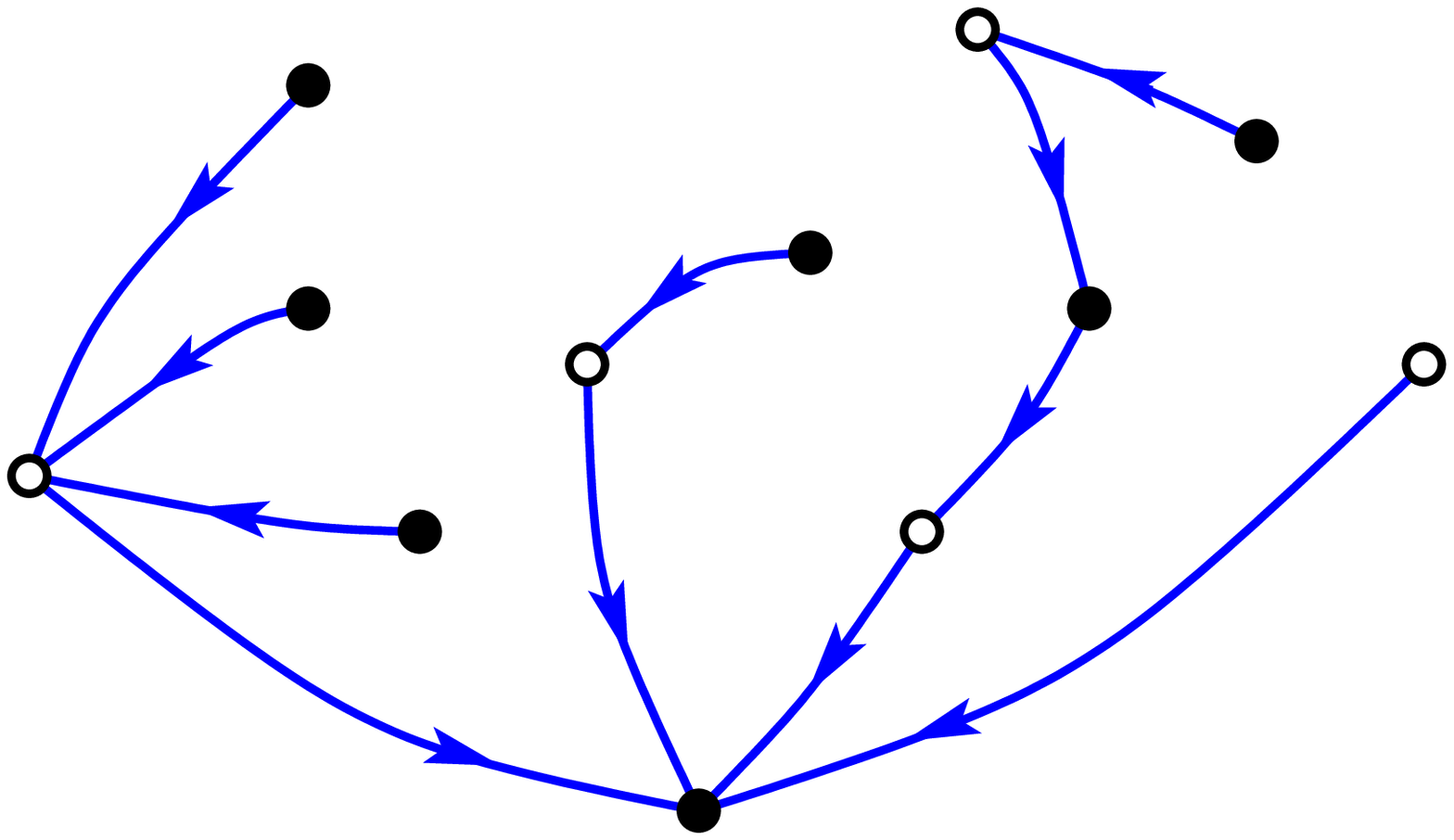}}%
    \Wa\Wb\Wt\endpspicture}
  \def\rouge{\pspicture(0,-2)(27,15)%
    \rput[bl](0,0){\includegraphics[width=27\psunit]{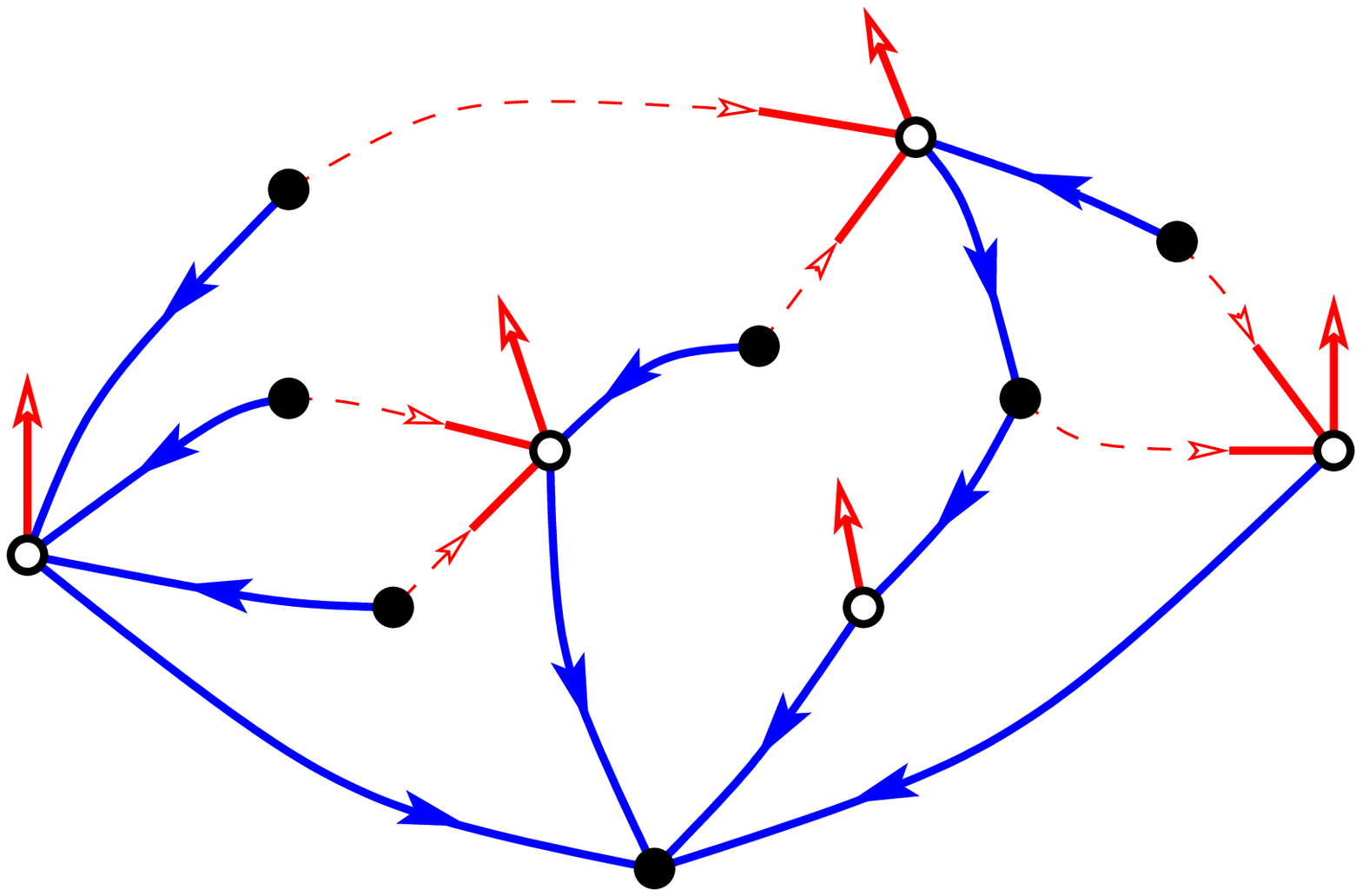}}%
    \Wa\Wc\Wm\endpspicture}
  %%%%
  \centering
  \begin{pspicture}(64,43)%\mygrid
    \rput[bl](0,11){\complet}
    \rput[tr](64,43){\bleu}
    \rput[br](64,2){\rouge}
    \psline{->}(29,20)(35,13)
    \psline{->}(29,24)(35,32)
  \end{pspicture}
  \caption{The words encoding a separating decomposition.}
  \label{fig:contour}
\end{figure}

We shall consider three subwords of $W_Q$: for any $\ell$ in
$\{a,b,c\}$, let $W_{\ell}$ denote the subword obtained by keeping
only the letters in the alphabet $\{\ell,\ul\}$. In order to describe
the properties of these words, we also introduce the \emph{tree-word}
$W_{\mathrm{t}}$ and the \emph{matching word} $W_{\mathrm{m}}$, that
are respectively obtained from $W_Q$ by keeping the letters in
$\{a,\ua,b,\ub\}$, and in $\{a,\ua,c,\uc\}$.

%%%%%%%%%%%%%%%%%%%%%%%%%%%%%%%%%%%%%%%%%%%%%%%%%%%%%%%%%%%%%%%%%%%%%%%%%%%%
\subsubsection{The tree-word encodes the blue tree.}\label{sec:treeWord}
%%%%%%%%%%%%%%%%%%%%%%%%%%%%%%%%%%%%%%%%%%%%%%%%%%%%%%%%%%%%%%%%%%%%%%%%%%%%
Observe that $W_{\mathrm{t}}$ corresponds to a classical Dyck encoding
of $\Tb$, in which the two alphabets $\{a,\ua\}$ and $\{b,\ub\}$ are
used alternatively to encode the bicoloration of vertices. Hence
$W_{\mathrm{t}}$ is just obtained by interlacing $W_a$ and $W_b$
starting with $a$, and each prefix of $W_{\mathrm{t}}$ has at least as
many non-underlined letters as underlined letters.

Let us count precisely the number of occurrences of letters $a$,
$\ua$, $b$ and $\ub$ in $W_{\mathrm{t}}$. For this purpose, let us
associate each edge of a tree with its extremity that is farther from
the root.
%(this yields a matching between the edges and the non-root vertices of the tree).
From the defining rules it follows that the two traversals of edges
corresponding to black vertices are encoded by $b$ and $\ua$, while
those of edges corresponding to white vertices are encoded by $a$ and
$\ub$.  In other words, each occurrence of a letter $a$, $\ua$, $b$,
$\ub$ corresponds to the first visit to a white vertex, last visit to
a black vertex, first visit to a black vertex, and last visit to a
white vertex, respectively. As $\Tb$ has $i$ non-root black vertices
and $j+2$ white vertices, the word $W_a$ has $j+2$ occurrences of $a$
and $i$ occurrences of $\ua$, shortly written
$W_a\in\mathfrak{S}(a^{j+2}\ua^i)$. Similarly,
$W_b\in\mathfrak{S}(b^i\ub^{j+2})$.  Furthermore, the fact that each
prefix of $W_{\mathrm{t}}$ has at least as many non-underlined letters
as underlined letters translates into the following property for the
pair $(W_a,W_b)$:
 
\begin{property}\label{property1}
  For $1\leq k\leq i$, the number of $a$'s on the left of the $k$th
  occurrence of $\ua$ in $W_a$ is strictly larger than the number of
  $\ub$'s on the left of the $k$th occurrence of $b$ in $W_b$.
\end{property}    

\begin{proof}
\cacher{ Assume that $W_{\mathrm{t}}$ is not a Dyck word, and consider
  the shortest prefix of $W_{\mathrm{t}}$ having more underlined
  letters than non-underlined letters. By minimality, the last letter
  of the prefix has to be underlined and is at an odd position
  $2\ell+1$, so that this letter is an $\ua$. By minimality also, the
  prefix $w_{2\ell}$ of length $2\ell$ has the same number of
  non-underlined letters as underlined letters. Moreover, $w_{2\ell}$
  has $\ell$ letters in $\{a,\ua\}$ and $\ell$ letters in $\{b,\ub\}$,
  because the letters of type $\{a,\ua\}$ alternate with letters of
  type $\{b,\ub\}$. Hence, if we denote by $k$ the number of $\ua$'s
  in $w_{2\ell}$, then $w_{2\ell}$ has $\ell-k$ occurrences of $a$,
  $k$ occurrences of $b$, and $\ell-k$ occurrences of $\ub$. In
  particular, the number of occurrences of $a$ on the left of the
  $(k+1)$th occurrence of $\ua$ in $W_a$ is $(\ell-k)$, and the number
  of occurrences of $\ub$ on the left of the $(k+1)$th occurrence of
  $b$ is at least $(\ell-k)$. This contradicts
  Property~\ref{property1} of an admissible triple of words.% 
  }  
%%%%%
  For each $k$, let $N_a(k)$ and $N_{\ub}(k)$ be the numbers of $a$'s
  and $\ub$'s in $W_{\mathrm{t}}$ on the left of the $k$th
  occurrence of $\ua$ (\resp  $b$).  Let $p$ be the
  prefix of $W_{\mathrm{t}}$ ending just before the $k$th occurrence
  of $\ua$. Notice that $p$ ends at a letter in $\{b,\ub\}$, so $p$
  has even length $2m$ with $m$ letters in $\{a,\ua\}$ and $m$ letters
  in $\{b,\ub\}$. Let $m_a$, $m_{\ua}$, $m_b$, $m_{\ub}$ be
  respectively the numbers of $a$'s, $\ua$'s, $b$'s, and $\ub$'s in
  $p$ (notice that $m_{\ua}=k-1$ and $m_a=N_a(k)$).  Since $\Wt$ is a
  Dyck word and since $p$ is followed by an underlined letter, we have
  $m_a+m_b> m_{\ua}+m_{\ub}$.  But $m_{\ua}=m-m_a$ and
  $m_b=m-m_{\ub}$, so we obtain both (i): $m_{\ub} < m_a=N_a(k)$ and
  (ii): $m_b> m_{\ua}=k-1$. From (ii) the $k$th occurrence of $b$ in
  $\Wt$ belongs to $p$, and from (i) the number $N_{\ub}(k)$ of
  $\ub$'s on its left is strictly smaller than $N_a(k)$. This
  concludes the proof.
\end{proof}

The words $W_a$ and $W_b$ have the additional property that two
letters are redundant in each word.  Indeed, the first and the last
letter of $W_a$ are $a$'s and the last two letters of $W_b$ are
$\ub$'s, because of the rightmost branch of $\Tb$ being reduced to an
edge, see Figure~\ref{fig:contour}.

%%%%%%%%%%%%%%%%%%%%%%%%%%%%%%%%%%%%%%%%%%%%%%%%%%%%%%%%%%%%%%%%%%%%%%%%%%%%

%\noindent{\bf Remark.} Another property satisfied by the word $W_a$ is 
%that it ends with a letter $a$.
%This corresponds to the property that the rightmost branch of $\Tb$ is
%reduced to an edge, see Figure~\ref{fig:contour}(b).

%%%%%%%%%%%%%%%%%%%%%%%%%%%%%%%%%%%%%%%%%%%%%%%%%%%%%%%%%%%%%%%%%%%%%%%%%%%%
\subsubsection{The matching word encodes the red edges.}
Let us now focus on $W_c$ and on the matching word
$W_{\mathrm{m}}$. Clearly, any occurrence of a letter $c$ ($\uc$) in
$W_Q$ corresponds to a red edge with white (black, \resp) origin, see
Figure~\ref{fig:contour}. Hence
$W_c\in\mathfrak{S}(c^{j+2}\uc^i)$. Moreover $W_c$ starts and ends
with a letter $c$, corresponding to the two outer red edges.

Observe also that any occurrence of $a$ in $W_{\mathrm{m}}$, which
corresponds to the first visit to a white vertex $v$, is immediately
followed by a pattern $\uc^{\ell}c$, with $\ell$ the number of
incoming red edges at $v$. Hence $W_{\mathrm{m}}$ satisfies the
regular expression:
\begin{equation}\label{eq:Wm}
W_{\mathrm{m}}\in ac(\ua^*a\uc^*c)^*,
\end{equation}
where $E^*$ denotes the set of all (possibly empty) sequences of elements from
$E$. Notice that this property uniquely defines $W_{\mathrm{m}}$ as a
shuffle of $W_a$ and~$W_c$.

\begin{lemma}\label{lem:orderinred}
Let $S$ be a separating decomposition, with $\Tb$ the tree induced by
the blue edges.  Consider a red edge $e$ of $S$ not incident to
$t$, with $b$ ($w$) the black (white, \resp) extremity of $e$. 
Then the last visit to $b$
occurs before the first visit to $w$ during a cw traversal around
$\Tb$ starting at $s$.
\end{lemma}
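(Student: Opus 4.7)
The plan is to show that $e$ appears as a chord of the outer face of $\Tb$ joining the ``last-visit'' corner of $b$ to the ``first-visit'' corner of $w$ in the cw contour of $\Tb$, and then to use the planar structure of the red tree $\Tr$ (embedded in this outer face, rooted at $t$) to determine the order in which those two corners are visited.

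First I will apply the local rule at the inner black vertex $b$: in clockwise cyclic order around $b$, the outgoing blue edge is followed by the incoming blue edges (the blue children in $\Tb$), then the outgoing red edge, then the incoming red edges. Consequently every red edge at $b$---in particular $e$---lies in the $\Tb$-corner between $b$'s last blue child (in cw) and $b$'s blue parent, which is exactly the corner visited at the last visit to $b$. The symmetric ccw version of the local rule at $w$ shows that every red edge at $w$, in particular $e$, lies in the $\Tb$-corner between $w$'s blue parent and $w$'s first blue child---the corner visited at the first visit to $w$. Hence $e$ is a chord of the outer face of $\Tb$ joining these two specific corners.

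The outer face of $\Tb$ is a topological disk bounded by the cw contour, and $e$ divides it into two regions. Removing $e$ from $\Tr$ disconnects the red tree into two subtrees: $T_{\top}$ containing the root $t$, and the complementary $T_{\bot}$. At the endpoint of $e$ that is the root of $T_{\bot}$ (the endpoint from which $e$ is outgoing red), applying the local rule once more places the other $T_{\bot}$-edges meeting that endpoint all on a specific side of $e$; since $T_{\bot}$ is connected it lies entirely on that side, inside a region $R_{\bot}$ whose boundary-arc along the cw contour runs clockwise from $b$'s last-visit corner to $w$'s first-visit corner. The complementary region $R_{\top}$ contains $T_{\top}$ and in particular $t$.

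Finally I will check that $s$'s outer corner lies on the complementary arc, the one bounding $R_{\top}$. This is so because $s$ is $\Tb$-adjacent to the outer whites $w_L$ and $w_R$, which are red-connected to $t$ via the outer-face edges $w_L t$ and $w_R t$; as $e$ is not incident to $t$, removing $e$ leaves $w_L, w_R$ in $T_{\top}$, so their red corners lie on $R_{\top}$'s arc, and therefore so does $s$'s outer corner, which is adjacent to $w_L$'s red corner in the contour via the $\Tb$-edge $s w_L$ (with no $e$-endpoint between them). Starting the cw contour at $s$ and proceeding clockwise, we thus reach $b$'s last-visit corner before crossing $R_{\bot}$'s arc to $w$'s first-visit corner, which establishes the lemma. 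The main subtlety lies in the planar step of the third paragraph: combining the local rotation rule at the root of $T_{\bot}$ with the connectedness of $T_{\bot}$ to pin down which side of $e$ the region $R_{\bot}$ occupies, and hence the cw order of the two corners.
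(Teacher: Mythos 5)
Your overall strategy --- view $e$ as a chord of the outer-face disk of $\Tb$, split the red tree $\Tr$ at $e$ into $T_{\top}\ni t$ and $T_{\bot}$, and locate the two pieces on the two sides of the chord --- is a viable route and genuinely different from the paper's, which instead takes the cycle $C$ formed by $e$ and the blue path joining its endpoints and derives a contradiction from the fact that the oriented red path from $w$ (or $b$) to $t$ would start inside $C$ and could never escape. Your first two paragraphs (every red edge at $b$, resp.\ $w$, sits in the last-visit, resp.\ first-visit, corner) coincide with the paper's opening observation and are fine.

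However, the argument has genuine gaps precisely at the step you yourself flag as ``the main subtlety'', and they are not only matters of detail. First, even granting that the local rule at the root of $T_{\bot}$ places $T_{\bot}$ inside $R_{\bot}$, the conclusion ``the complementary region $R_{\top}$ contains $T_{\top}$ and in particular $t$'' is a non sequitur: $T_{\top}$ and $T_{\bot}$ are disjoint, so nothing yet prevents both from being embedded in $R_{\bot}$. Moreover, when the root of $T_{\bot}$ carries no red edge other than $e$ (so $T_{\bot}$ is a single vertex --- a case that does occur), your argument yields no information at all about which region is which. The uniform fix is to apply the local rule at the \emph{other} endpoint of $e$ (its red parent): since $e$ is not incident to $t$, that vertex has an outgoing red edge distinct from $e$, and the local rule places it, hence all of $T_{\top}$ and hence $t$, on the $R_{\top}$ side; this also requires the orientation-chasing (cw versus ccw rule at black versus white vertices, and how the contour sweeps the corners) that you assert but do not carry out. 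Second, in the last paragraph the parenthetical ``with no $e$-endpoint between them'' fails when $w=w_L$: the endpoint of $e$ at $w_L$ then lies \emph{inside} $w_L$'s red corner, between $s$'s outer corner and the attachment of the edge $w_Lt$, and the adjacency argument breaks exactly in the configuration it is supposed to exclude. (It can be repaired by using $w_R$ instead, or by locating $s$'s corner relative to the outgoing red half-edge of $w_L$, which follows all incoming ones along the contour.) Each gap is repairable, but as written the chain from the local rules to the cyclic position of $s$'s starting corner is not closed.
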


\begin{proof}
First, the local conditions of separating decompositions ensure that
$e$ is connected to $b$ ($w$) in the corner corresponding to the last
visit to $b$ (first visit to $w$, \resp).  Hence we just have to prove
that, if $C$ denotes the unique simple cycle formed by $e$ and edges
of the blue tree, then the edge $e$ is traversed from $b$ to $w$ when
walking cw around $C$.  Assume \emph{a contrario} that $e$ is
traversed from $w$ to $b$ during a cw walk around $C$.  If $e$ is
directed from $b$ to $w$ (the case of $e$ directed from $w$ to $b$ can
be treated similarly), then the local conditions of separating
decompositions ensure that the red outgoing path $P(w)$ of $w$ (\ie
the unique oriented red path that goes from $w$ to $t$) starts going
into the interior of $C$. According to the local conditions, no
oriented red path can cross the blue tree, hence $P(w)$ has to go out
of $C$ at $b$ or at $w$: going out at $w$ is impossible as it would
induce a red circuit, going out at $b$ contradicts the local
conditions; hence either case yields a contradiction.
\end{proof}

Let us now consider a red edge $e=(b,w)$ with a black origin.  The
outgoing half-edge of $e$ is in the corner of the last visit to $b$,
encoded by a letter $\ua$, while the incoming half-edge of $e$, which
is encoded by a letter $\uc$, is in the corner of the first visit to
$w$.  Hence, according to Lemma~\ref{lem:orderinred}, the $\ua$ occurs
before the $\uc$.  In other words, the restriction of $W_{\mathrm{m}}$
to the alphabet $\{\ua, \uc\}$ is a parenthesis word (interpreting
each $\ua$ as an opening parenthesis and each $\uc$ as a closing
parenthesis), and each parenthesis matching corresponds to a red edge
with a black origin, see Figure~\ref{fig:contour}. According to the 
correspondence between the $a$'s and the $c$'s 
(see the regular expression~\eqref{eq:Wm} of $W_{\mathrm{m}}$), this parenthesis
property of $W_{\mathrm{m}}$ is translated as follows:

\begin{property}\label{property2}
  For $1\leq k\leq j+2$, the number of $\ua$'s on the left of the
  $k$th occurrence of $a$ in $W_a$ is at least as large as the number
  of $\uc$'s on the left of the $k$th occurrence of $c$ in $W_c$.
\end{property}

\noindent{\bf Definition.}  A triple of words $(W_a, W_b, W_c)$ in
$\mathfrak{S}(a^{j+2}\ua^i) \times \mathfrak{S}(b^i\ub^{j+2}) \times
\mathfrak{S}(c^{j+2}\uc^i)$ is said to be \emph{admissible of type
  $(i,j)$} if $W_a$ ($W_c$, \resp) ends with a letter $a$ ($c$, \resp )
and if Property~\ref{property1} and Property~\ref{property2} are
satisfied.

\medskip

Observe that this definition yields other redundant letters, namely,
$W_a$ has to start with a letter $a$, $W_c$ has to start with a letter
$c$, and $W_b$ has to end with two letters $\ub$.

\subsection{From an admissible triple of words to a triple of non-intersecting paths}
\label{sec:representation}
The properties of an admissible triple of words are formulated in a
more convenient way on lattice paths.  This section describes the
correspondence, illustrated in Figure~\ref{fig:paths}.

Consider an admissible triple of words $(W_a,W_b,W_c)$
of type $(i,j)$, 
%Let us delete the last letter in each word,
%which are redundant ($a$ in $W_a$, $\ub$ in $W_b$, $c$ in $W_c$).
% initial and
%terminal $a$ in $W_a$, last two $\ub$ in $W_b$, and initial and
%terminal $c$ in $W_c$.
and represent each word as an upright lattice path starting at the
origin, the binary word being read from left to right, and the
associated path going up or right depending on the letter. The letters
associated to up steps are $a$, $\ub$ and $c$. Clearly, as $(W_a, W_b,
W_c) \in \mathfrak{S}(a^{j+2}\ua^i) \times \mathfrak{S}(b^i\ub^{j+2}) \times
\mathfrak{S}(c^{j+2}\uc^i)$, the three paths end at $(i,j+2)$.

\begin{figure}
  \centering
  \includegraphics[width=14cm]{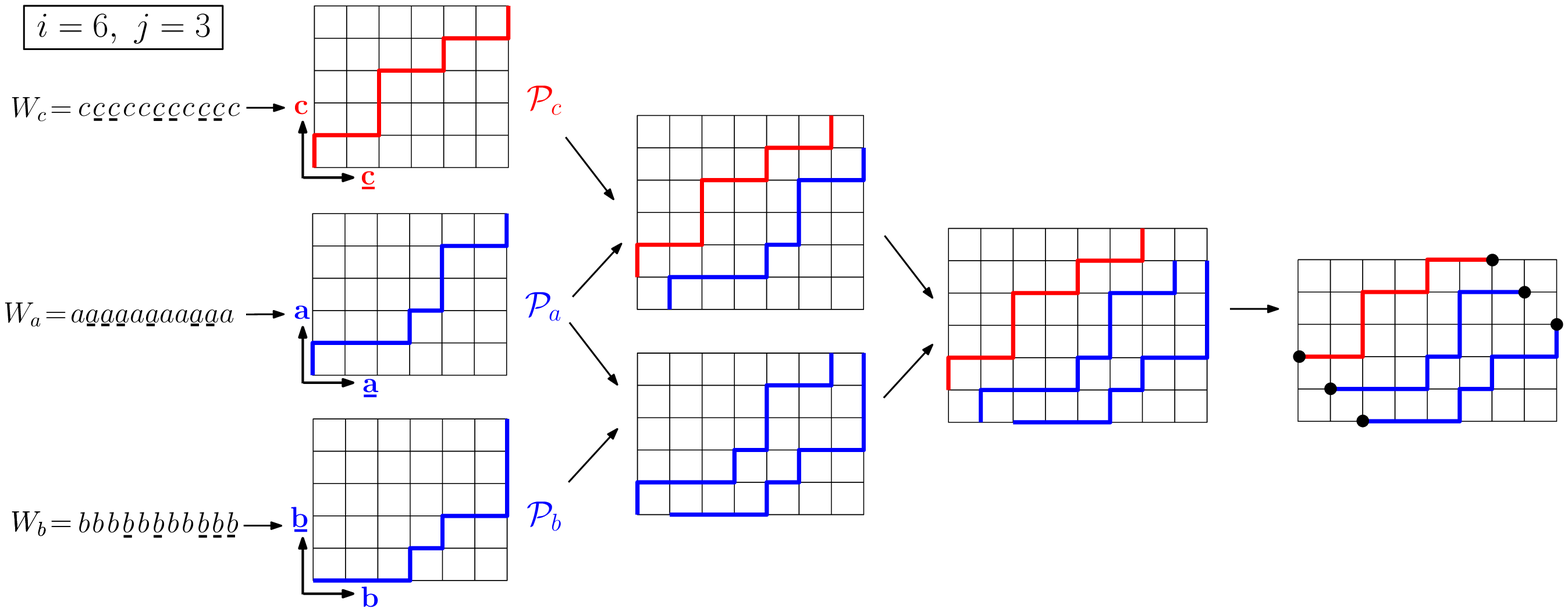}
  \caption{An admissible triple of words is naturally represented as a non-intersecting triple of paths.}
  \label{fig:paths}
\end{figure}

Property~\ref{property1} is translated into: 
\begin{quote}
``for $1\leq k\leq i$, the
$k$th horizontal step of $\mathcal{P}_a$ (ending at abscissa $k$) is
strictly above the $k$th horizontal step of $\mathcal{P}_b$.''
\end{quote}
Hence,
Property~\ref{property1} is equivalent to the fact that $\mathcal{P}_a$ and the 
shift of $\mathcal{P}_b$ one step to the right are non-intersecting.  

Similarly,
Property~\ref{property2} is translated into: 
\begin{quote}
``for $1\leq k\leq j+2$, the $k$th
vertical step of $\mathcal{P}_a$ is weakly on the right of the $k$th
vertical step of $\mathcal{P}_c$.''
\end{quote}
In other words, $\mathcal{P}_c$ is weakly
top left of $\mathcal{P}_a$. Hence, Property~\ref{property2}  is equivalent to the fact that
$\mathcal{P}_a$ and the shift of $\mathcal{P}_c$ one step up-left  are
non-intersecting. Let us now consider the redundant letters; they
correspond to two vertical steps in each path, and removing them leads
to a triple $(\cP_b',\cP_a',\cP_c')$ of non-intersecting upright lattice paths with origins $(-1,1)$, $(0,0)$, 
$(1,-1)$ and endpoints $(i-1,j+1)$, $(i,j)$, $(i+1,j-1)$. Such a triple of paths is called
a \emph{non-intersecting triple of paths of type $(i,j)$}.

%The conditions on the words are thus equivalent to
%the property that the three paths put together form a non-intersecting
%triple of paths. Finally, as the paths have to be non-intersecting,
%the first step of $\mathcal{P}_a$ and $\mathcal{P}_c$ and the last
%step of $\mathcal{P}_b$ are redundant (all up), so that origins and
%endpoints can be modified accordingly (see Figure~\ref{fig:paths}
%right).

%The reformulation in terms of paths is summarized by the
%following lemma.

%\begin{lemma}
%\label{lemma:bij_paths}
%Admissible triples of words of type $(i,j)$ are in bijection with 
%non-intersecting triples $(\mathcal{P}_a, \mathcal{P}_b, \mathcal{P}_c)$ 
%of paths such 
%that $\mathcal{P}_a$ goes from $(1,0)$ to $(i+1,j-1)$, $\mathcal{P}_b$ goes
%from $(2,0)$ to $(i+2,j-1)$, and $\mathcal{P}_c$ goes from $(0,1)$ to $(i,j)$.
%\end{lemma}%

%Finally, Theorem~\ref{theo:bijection} follows from the
%concatenation of Theorem~\ref{theo:bi},
%Proposition~\ref{prop:bij}, and Lemma~\ref{lemma:bij_paths}.

%\vspace{-0.2cm}

To sum up, we have described a mapping $\Phi$ from
separating decompositions with $(i+2)$ black and $(j+2)$
white vertices to non-intersecting triples of paths of type $(i,j)$.
%This
%mapping is proved to be bijective by defining an inverse mapping in a
%way that naturally reverses the operations performed by $\Phi$.

%\subsection{Inverse mapping (sketch).} 

%Next, we
%match outgoing red half-edges at black vertices and incoming red
%half-edges at white vertices following Property~2.
% ensures that the subword of
%$\Wm$ by keeping only the letters in $\{\ua,\uc\}$ is a parenthesis
%word, if $\ua$ (\resp $\uc$) is identified with an opening (\resp
%closing) parenthesis. This results in a matching of red half-edges;
%the red outgoing half-edge at the $k$th black vertex (black vertices
%are ordered \wrt last visit in $\Tb$) is merged with the
%incoming red half-edge associated with the letter $\uc$ matched with
%the $k$th occurence of $\ua$ in $\Wm$.  
%Finally it is easily shown that the red half-edges going out of white
%vertices can be completed in a unique way to edges so as to form only
%quadrangular faces, as illustrated in the figure below.  
%\begin{center}
%  \includegraphics[width=12cm]{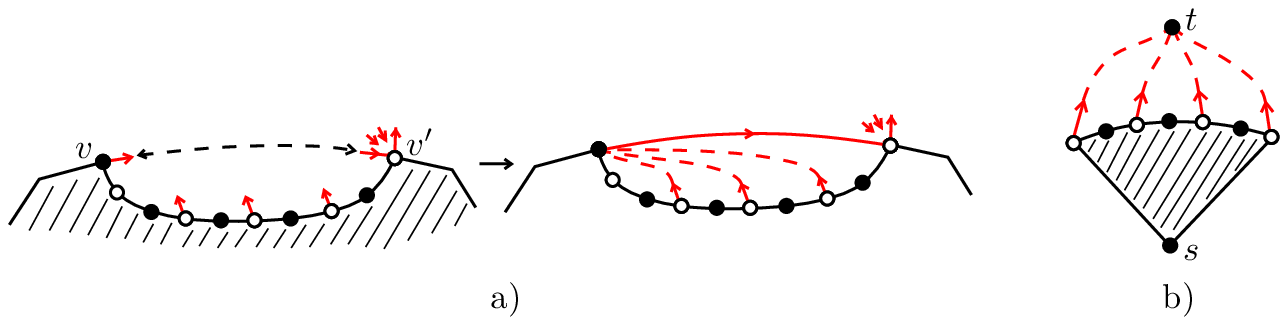}
%\end{center}
%By construction, the obtained figure is a quadrangulation endowed with
%a bicolorientation, and the construction is inverse to $\Phi$.

\section{The inverse mapping}
\label{sec:inverse}

As we show in this section, the mapping $\Phi$ is easily checked to be a bijection, as all steps
(taken in reverse order) are invertible.
Start from a non-intersecting triple of paths $(\cP_b',\cP_a',\cP_c')$  of type $(i,j)$,
where $\cP_b'$ goes from $(-1,1)$ to $(i-1,j+1)$, $\cP_a'$ goes from $(0,0)$ to $(i,j)$, and 
$\cP_c'$ goes from $(1,-1)$ to $(i+1,j-1)$.
%that go respectively from $(-1,1)$, $(0,0)$, $(1,-1)$ to $(i-1,j+1)$, $(i,j)$, $(i+1,j-1)$.
Append two up-steps in each of the 3 paths: $\cP_b=\cP_b'\uparrow\uparrow$, $\cP_a=\uparrow\cP_a'\uparrow$, 
$\cP_c=\uparrow\cP_c'\uparrow$.

\subsection{Associate an admissible triple of words to the triple of paths.}
Each of the three paths $(\cP_b,\cP_a,\cP_c)$  is equivalent to a binary
word  on the alphabet $\{u,r\}$, corresponding to the sequence of up and right steps when
traversing the path.  
Let $(W_a,W_b,W_c)$ be the three binary words associated respectively to $(\cP_a,\cP_b,\cP_c)$.
In order to have different alphabets for the three words, we substitute the alphabet $(u,r)$
by $(a,\ua)$ for the word $W_a$, by $(\ub,b)$ for the word $W_b$, and by $(c,\uc)$ for the word $W_c$.
As the triple $(\cP_b,\cP_a,\cP_c)$ is non-intersecting, the triple of words $W_a\in\mathfrak{S}(a^{j+2}\ua^i), W_b\in\mathfrak{S}(b^i\ub^{j+2}), W_c\in\mathfrak{S}(c^{j+2}\uc^i)$ is readily checked to be an admissible triple of words of type $(i,j)$. 

\subsection{Construct the blue tree.} 
Define the \emph{tree-word} $\Wt$ as the word obtained by 
interlacing $W_a$ and
$W_b$ starting with $a$. 

\begin{claim}
The word $\Wt$ is a Dyck word (when seeing each letter in $\{a,b\}$ as opening parenthesis and each letter in $\{\ua,\ub\}$ as closing parenthesis).
\end{claim}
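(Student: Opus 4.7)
The plan is to argue by contradiction on a shortest prefix of $\Wt$ that violates the Dyck condition. First I note that $W_a$ and $W_b$ both have length $i+j+2$, so $\Wt$ has length $2(i+j+2)$ with equal numbers $i+j+2$ of opening and closing letters; it is therefore enough to check that in every prefix, opens are at least as many as closes. Suppose this fails and let $p$ be the shortest offending prefix, in which closes strictly exceed opens. Minimality forces the last letter of $p$ to be a closing letter and the prefix $p'$ of length $|p|-1$ to be balanced (opens $=$ closes). The interlacing convention places $W_a[k]$ at position $2k-1$ and $W_b[k]$ at position $2k$ in $\Wt$, so the parity of $|p|$ determines which of $W_a, W_b$ contributes the last letter.

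If $|p|=2m$, then $p'$ contains $m$ letters of $W_a$ and $m-1$ letters of $W_b$, for an odd total $2m-1$, which cannot be split into equal opens and closes; this rules out the even case by parity alone.

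The odd case $|p|=2m+1$ is the substantial one. The last letter is $W_a[m+1]$ and must be $\ua$. Writing $m_a, m_{\ua}$ for the numbers of $a$'s and $\ua$'s in the first $m$ positions of $W_a$, and $m_b, m_{\ub}$ for the analogous counts in $W_b$, the balance of $p'$ simplifies to $m_a+m_b=m$. Setting $k=m_{\ua}+1$, the last letter of $p$ is the $k$th $\ua$ of $W_a$, and the number of preceding $a$'s is $N_a(k)=m_a=m-k+1$. The equation $m_b=m-m_a=k-1$ then forces the $k$th $b$ of $W_b$ to lie strictly past position $m$, so all $m_{\ub}=m-k+1$ letters $\ub$ in the first $m$ positions of $W_b$ precede it, yielding $N_{\ub}(k)\ge m-k+1=N_a(k)$. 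This contradicts Property~\ref{property1}, which requires $N_a(k)>N_{\ub}(k)$. The main obstacle is keeping the two layers of indexing straight---positions in $\Wt$ versus positions in $W_a$ and $W_b$, and the correct index $k$ linking the offending letter to the hypothesis of Property~\ref{property1}---but once this bookkeeping is in place the arithmetic is immediate.
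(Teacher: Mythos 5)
Your proof is correct and takes essentially the same approach as the paper: a shortest offending prefix, the parity argument forcing the last letter to be an $\ua$ from $W_a$, and a count of $a$'s, $b$'s, $\ub$'s in the balanced prefix leading to a contradiction with Property~\ref{property1}. Your bookkeeping (with $k=m_{\ua}+1$) matches the paper's (which uses $k=m_{\ua}$ and invokes the property at index $k+1$), and you merely make explicit the even-length parity case that the paper leaves implicit.
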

\begin{proof}
Clearly $\Wt$ has the same number of underlined
as non-underlined letters.
Assume that $W_{\mathrm{t}}$ is not a Dyck word, and consider the shortest 
 prefix of $W_{\mathrm{t}}$ having more underlined
letters than non-underlined letters. By minimality, the last letter
of the prefix has to be underlined and is at an odd position $2m+1$,
so that this letter is an $\ua$. By minimality also,
 the prefix $w_{2m}$ of length $2m$ has the same number of non-underlined letters as
underlined letters. Moreover, $w_{2m}$ has $m$ 
letters in $\{a,\ua\}$ and $m$ letters in $\{b,\ub\}$, because the letters
of type $\{a,\ua\}$ alternate with letters of type $\{b,\ub\}$. Hence,
if we denote by $k$ the number of $\ua$'s in $w_{2m}$, then $w_{2m}$
has $m-k$ occurrences of $a$, $k$ occurrences of $b$, and 
$m-k$ occurrences of $\ub$. In particular, the number of occurrences of $a$ on 
the left of the $(k+1)$th occurrence of $\ua$ in $W_a$ is $(m-k)$, and 
the number of occurrences of $\ub$ on the left of the $(k+1)$th occurrence
of $b$ is at least $(m-k)$. This contradicts Property~\ref{property1}.  
\end{proof}

Denote by $\Tb$ the plane tree whose Dyck word  is $\Wt$. Actually, as
we have seen in Section~\ref{sec:treeWord}, 
$\Wt$ is a refined Dyck encoding of $\Tb$ that also
takes account of the number of vertices at even depth, colored black,
and the number of vertices at odd length, colored white.
Precisely, $\Tb$ has $i+1$ black
vertices  and $j+2$ white vertices. Denote by $s$ the (black) root of $\Tb$,
and orient all the edges of $\Tb$ toward the root.

\subsection{Insert the red half-edges.} 
The next step is to insert the red edges. Precisely we first insert 
the red \emph{half-edges} (to be merged into complete red edges).  Define the
 \emph{matching word} $\Wm$ as the unique shuffle of $W_a$
and $W_c$ that satisfies the regular expression $ac(\ua^*a\uc^*c)^*$.
For $1\leq k\leq j+2$, consider the $k$th white vertex $w$ in $\Tb$,  the 
vertices being ordered \wrt the first visit during a cw traversal of $\Tb$ starting at $s$.
Let $\ell\geq 0$ be the
number of consecutive $\uc$'s that follow the $k$th occurrence of $a$
in $\Wm$.  Insert $\ell$ incoming and one outgoing red half-edges (in
clockwise order) in the corner of $\Tb$ traversed during the first
visit to $w$. Then, add an outgoing red half-edge to each black vertex
$b$ in the corner traversed during the last visit to $b$. 
The red half-edges are called \emph{stems} as long as they are not completed
into complete red edges, which is the next step. 
Observe that the local conditions of a separating decomposition are already satisfied  
around each vertex (the pole $t$ is not added yet).

\subsection{Merge the red stems into red edges.}\label{sec:closure}
Next, we
match the outgoing red stems at black vertices and the incoming red
stems (which are always at white vertices). Property~\ref{property2}
 ensures that the restriction of 
$\Wm$ to the alphabet $\{\ua,\uc\}$ is a parenthesis
word, viewing each $\ua$ as an opening parenthesis and each $\uc$ as a closing parenthesis. 
By construction, this word corresponds to walking around $\Tb$ and writing a $\ua$ for each last visit to 
a black vertex and a $\uc$ for each incoming red stem.

This yields a matching of the red half-edges;
the red outgoing half-edge inserted in the corner corresponding to the $k$th black vertex (black vertices
are ordered \wrt the last visit in $\Tb$) is merged with the
incoming red half-edge associated with the letter $\uc$ matched with
the $k$th occurrence of $\ua$ in $\Wm$, see Figure~\ref{fig:face_a}.  
Such an operation is called a \emph{closure}, as it
``closes'' a bounded face $f$ on the right
of  the new red edge $e$. The origin of $e$ is called the \emph{left-vertex} of $f$.

%What we do now is to match outgoing red stems with incoming red stems so
%as to obtain red edges going from a black to a white vertex: these are called
%red edges of \emph{black type}. The fact that Condition P2 is satisfied implies
%that each prefix of $W_{\mathrm{m}}$ has at least as many $\ua$'s as $\uc$'s.
%Hence, if we associate an opening parenthesis to each occurence of $\ua$ 
%and a closing parenthesis to each occurence of $\uc$ in $W_{\mathrm{m}}$, we
%obtain a parenthesis matching between the $\ua$'s and the $\uc$'s. 
%For each matching between an $\ua$ and a $\uc$, we merge the
%outgoing red stem of $\ua$ with the incoming red stem of $\uc$ into an edge.

%The left-to-right order of the $\ua$'s and $\uc$'s
%in $W_{\mathrm{m}}$ corresponds to the order in which the corresponding  
%red stems are
%crossed during a clockwise traversal  of the contour of $\Tb$ 
%starting at the root.
%Hence, as the matching between the $\ua$'s and the $\uc$'s in $W_{\mathrm{m}}$ 
%is a parenthesis matching, the corresponding matchings of pairs of half-edges
%maintains planarity, \ie the
%fusion of two half-edges does not create a crossing.

We perform the closures one by one, following an order consistent with 
the $\ua$'s being matched inductively with the $\uc$'s in $W_{\mathrm{m}}$. In Figure~\ref{fig:contour}, 
this means that the red edges 
with a black origin are processed ``from bottom to top''.  
Observe that the planarity is preserved throughout the closures: the red edges that are completed
are nested in the same way as the corresponding arches in the parenthesis word.
%Finally, Observe that the total number of closures
%to perform is $i$, as each closure is associated with one of the $i$ non-root black vertices in $\Tb$.

%For $k\in[1..i]$, let $F_k$ be the figure obtained after performing the first $k$ closures
%(there are $i$ closures, as each closure is associated with one of the $i$ non-root black vertices in 
%$\Tb$),
%Clearly $F_k$ has $k$ bounded faces as each closure forms a bounded face. that are 
%and let $F=F_i$. In $F$ the bounded faces  are in one-to-one correspondence with the red edges of black type. 
%Moreover, in each face of $F$ (including the outer one), 
%the stems going out of white vertices have no opposite half-edge yet. 

\subsection{Insert the remaining half-edges.}
The last step is to complete the stems going out of white vertices into
complete red edges going into black vertices, so as to obtain a quadrangulation endowed 
with a separating decomposition. 
%We show that
%this is done by completing each half-edge $h$ to an edge connected
%to the left-vertex of the bounded face  
%containing $h$.

%When the red edges of black type are successively processed (merged) in 
%a bottom-to-top order, the following invariant $I$ is satisfied by the
%current figure $F_i$ at each step.

\begin{lemma}
For each $k\in[0..i]$ consider the planar map $F_k$ formed by the blue edges
and the completed red edges 
after $k$ closures have been performed. 
The following invariant holds.
\begin{quote}
(I): ``Consider any pair  $c_w,c_b$ of consecutive corners of $F_k$ during
 a ccw traversal of the outer face of $F_k$ (i.e., with the outer face on the right), 
 such that $c_w$ is incident to
a white vertex (thus $c_b$ is incident to a black vertex). Then exactly one of
the two corners contains an outgoing (unmatched) stem.''
\end{quote}
\end{lemma}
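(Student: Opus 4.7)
The plan is to prove (I) by induction on the number $k$ of closures performed.

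For the base case $k=0$, the outer face of $F_0$ is unique, and walking ccw along it corresponds to the cw contour tour of $\Tb$ starting at $s$, with the inserted red stems located in the corners in which they were placed. Because $\Tb$ is bipartite, the corners visited in this contour alternate between white and black, so each consecutive (white, black) pair of corners corresponds to a white-letter followed by a black-letter in $\Wt$, that is to one of the four patterns $(a,b)$, $(a,\ua)$, $(\ub,b)$, $(\ub,\ua)$. Since outgoing stems are inserted exactly at the $a$-corners (first visits of white vertices) and at the $\ua$-corners (last visits of black vertices), a direct case analysis on these four patterns verifies that in each consecutive (white, black) pair exactly one corner contains an outgoing stem.

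For the inductive step, assume (I) holds for $F_k$, and let the $(k+1)$th closure match the outgoing stem at a last-visit corner $c_b^\star$ of a black vertex with an incoming stem at a first-visit corner $c_w^\star$ of a white vertex, corresponding to a matched pair $(\ua,\uc)$ in $\Wm$. Because the chosen order of closures respects the parenthesis nesting of $\Wm$, every $(\ua,\uc)$-pair nested strictly inside the one we are processing has already been completed into a red edge. Consequently the new red edge $e$ bounds a face $f$ whose interior side carries no remaining unmatched stems, and $F_{k+1}$ is obtained from $F_k$ by detaching from the outer walk the portion lying inside $f$ and replacing it by the outer side of $e$; in the process, the matched outgoing stem at $c_b^\star$ and the matched incoming stem at $c_w^\star$ disappear from the outer boundary (the outgoing stem still sitting at $c_w^\star$ is left untouched).

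The main technical step, which I expect to be the real obstacle, is to verify (I) for the two new consecutive (white, black) corner pairs that appear along the outer side of $e$, one near each endpoint of $e$. Using (I) for $F_k$ at the corners immediately preceding and following $c_b^\star$ and $c_w^\star$ in the ccw outer walk, and tracking that exactly one outgoing stem vanishes at $c_b^\star$ while the outgoing stem at $c_w^\star$ is preserved, a short case analysis on the colours of the endpoints of $e$ (black origin, white target) will show that each of the newly adjacent (white, black) corner pairs again contains exactly one outgoing stem. This establishes (I) for $F_{k+1}$ and completes the induction.
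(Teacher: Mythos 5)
Your overall strategy (induction on the number of closures) is the same as the paper's, but both the base case and the inductive step have genuine problems. The base case rests on a false identification: the ccw traversal of the outer face of $F_0$ keeps the outer face on the \emph{right}, whereas the paper's cw contour tour of $\Tb$ keeps it on the \emph{left}, so the two walks are reverses of each other. This is not a cosmetic issue, because for pairs (white corner, black corner) that are consecutive in the \emph{cw} tree walk the invariant is simply false: take a white vertex $w$ whose first child $v$ is a black leaf; the first-visit corner of $w$ and the unique corner of $v$ are cw-consecutive and \emph{both} carry an outgoing stem (and the last-visit corner of $w$ followed by a corner of $s$ gives a cw-consecutive pair with \emph{no} outgoing stem). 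Your list of patterns is also off: consecutive letters of $\Wt$ alternate between $\{a,\ua\}$ and $\{b,\ub\}$, so $(a,\ua)$ and $(\ub,b)$ never occur as adjacent letters. The correct observation is that a ccw-consecutive pair $(c_w,c_b)$ corresponds, in the cw tree walk, to $c_b$ immediately followed by $c_w$ with exactly one letter $y\in\{a,\ua\}$ between them; if $y=a$ then $c_w$ is a first visit (stem) and $c_b$ is not a last visit (no stem), and if $y=\ua$ the situation is reversed, giving ``exactly one'' in both cases.

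In the inductive step you defer the verification at exactly the point where the content lies (``a short case analysis \dots will show''), and the setup you describe is not quite right. After the closure there is exactly \emph{one} new (white, black) pair in the ccw outer walk, namely $(c_w^\star,c_b^\star)$ itself: the closure expels the corners strictly between them and makes $c_b^\star$ the new follower of $c_w^\star$; the pairs on either side of the affected arc are (black, white) pairs and are not constrained by $(I)$. What must be checked is that $c_b^\star$ now has no outgoing stem (its unique one was just matched) while the outgoing stem of $w$ at $c_w^\star$ survives \emph{on the outer side} of the new edge $e$ --- this last point follows from the prescribed cw order of the stems at $w$ (incoming stems before the outgoing one), and is the one nontrivial fact your sketch does not supply. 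Finally, your claim that the interior of the closed face carries no remaining unmatched stems is false: the outgoing stems of the white vertices enclosed in $f$ remain unmatched and are only completed toward the left-vertex of $f$ in the final step of the construction; this happens not to damage $(I)$, which concerns only the outer face, but it signals that the picture of what a closure does is not yet accurate.
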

\begin{proof}
Induction on $k$.
At the initial step, $F_0$ is the tree $\Tb$. The red stems are inserted in the corners
of $\Tb$---as described in Section~\ref{sec:closure}---in a way that satisfies the local conditions
of separating decompositions. 
Hence it is an easy exercise to check that 
 $F_0$ satisfies $(I)$.
Now assume that, for $k\in[0..i-1]$, $F_k$ satisfies $(I)$, and let us show that the 
same holds for $F_{k+1}$.
Consider the closure that is performed from $F_k$ to $F_{k+1}$.
This closure completes a red edge $e=(b,w)$, where $e$ starts from the corner $c_b$ at
the last visit to $b$ and ends at the corner $c_w$ at the first visit to $w$.
As we see in Figure~\ref{fig:face_a}, the closure expels all the 
corners strictly between $c_w$ and $c_b$ from the outer face, and it makes $c_b$ the new follower of $c_w$. 
According to the local conditions of separating decompositions, $c_w$
contains an outgoing stem in the outer face of $F_{k+1}$. In addition, $c_b$ contains no
outgoing stem in $F_{k+1}$, because the outgoing stem of $b$ is matched by the closure.
Hence, $F_{k+1}$ satisfies $(I)$. 
\end{proof}

\begin{figure}
\centering
\subfigure[\label{fig:face_a}]{\includegraphics[height=4em]{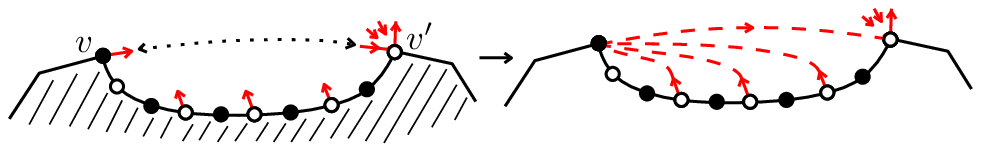}}
\qquad\qquad
\subfigure[\label{fig:face_b}]{\includegraphics[height=7em]{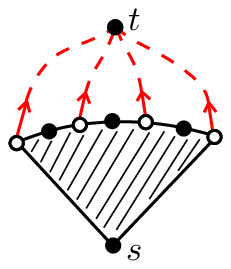}}
\caption{Completing the red stems going out of white vertices.}
\label{fig:face}
\end{figure}

Denote by $F=F_i$ the figure that is 
obtained after all closures have been performed (there are $i$ closures, as each closure is associated
with one of the $i$ non-root black vertices of $\Tb$). Note that each bounded
face $f$ of $F$ has been ``closed'' by matching a red half-edge going out of a black 
vertex $b$ with a red half-edge going into a white vertex $w$. The vertex $b$
is called the \emph{left-vertex} of~$f$. 

Let us now describe how to complete $F$ into a separating decomposition on a 
quadrangulation.  
Add an isolated vertex $t$ in the outer face of $F$.
Taking advantage of Invariant $(I)$, it is easy to complete suitably each red stem $h$
going out of a white vertex:
\begin{itemize}
\item
if $h$ is in a bounded face $f$ of  $F$ we complete $h$ into an edge 
connected to the left-vertex of $f$; completing all the half-edges
inside the face $f$ splits $f$ into quadrangular faces, as shown in 
Figure~\ref{fig:face_a}. 
\item
if $h$ is in the outer face of $F$ we complete $h$ into an edge connected to the 
vertex $t$; completing all such half-edges splits 
the outer face of $F$ into quadrangular faces all incident to $t$, and $t$ is incident to red
incoming edges only, see Figure~\ref{fig:face_b}.
\end{itemize}
The planar map we obtain is thus a quadrangulation.
In addition it is easy to check that the orientations and colors of the edges satisfy the
local conditions of a separating decomposition.
Indeed, the local conditions are satisfied in $F$. Afterwards
 the (black) left-vertex of each bounded face of $F$ receives new incoming red edges
in cw order after the red outgoing edge, and the vertex $t$ receives 
red incoming edges only. Hence the local conditions 
 remain satisfied after inserting the last red half-edges.

To sum up, we have described a mapping $\Psi$ from non-intersecting triples
of paths of type $(i,j)$   to separating decompositions with $i+2$ black vertices and 
$j+2$ white vertices.
It is easy to check step by step that the mapping $\Phi$ described in Section~\ref{sec:encode} 
and the mapping $\Psi$ 
are mutually inverse. %This yields the bijective result stated in Theorem. 
Together with Proposition~\ref{theo:bi}, this yields our main bijective result announced in Theorem~\ref{theo:bijection}.

\section{Specialization into a bijection for Schnyder woods}\label{sec:schnyder}
A \emph{triangulation} is a planar map with no loop nor multiple edge such that each face is triangular.
Given a triangulation $T$, let $s,t,u$ be its outer vertices in cw order. 
A Schnyder wood on $T$ is an orientation and coloration---in blue, red, or green---of the inner edges of $T$ such that the following local conditions are satisfied (in the figures, blue edges are solid, red edges are dashed, and green edges are dotted):
\begin{itemize}
\item
Each inner vertex $v$ of $T$ has exactly one outgoing edge in each color. The edges leaving  $v$ in color
 blue, green, and red, occur in cw order around $v$.
In addition, the incoming edges of one color appear between the outgoing edges of the two other colors,
see Figure~\ref{fig:SchnyderSep}(a).
\item
All the  inner edges incident to the outer vertices  are incoming, and such edges are colored 
blue, green, or red, whether the outer vertex is $s$, $t$, or $u$, respectively.
\end{itemize}

Definition, properties, and applications of 
Schnyder woods are given in Felsner's monograph~\cite[Chapter 2]{FeBook}.
Among the many properties of Schnyder woods, it is well known 
that the subgraphs of $T$
in each color are trees that span all the inner vertices and one outer vertex (each of the 3 outer vertices
is the root of one of the trees).

We show here that  Schnyder woods are in bijection with specific separating decompositions, and that
 such separating decompositions have one of the 3 encoding paths that is redundant, and  the two 
other ones are Dyck paths. Afterward we show that this bijection is
exactly the one recently described by Bernardi and Bonichon in~\cite{BeBo07}
(which itself reformulates Bonichon's original construction~\cite{B02}).

Starting from a Schnyder wood $S$ with $n$ inner vertices, 
we construct a separating decomposition $D=\alpha(S)$
as follows, see Figure~\ref{fig:SchnyderSep}:
\begin{itemize}
\item
Split each inner vertex $v$ of $T$ into a white vertex $w$ and a black vertex $b$
that are connected by a blue edge going from $b$ to $w$. In addition $w$ receives
the outgoing green edge, the outgoing blue edge and the incoming red edges of $v$, and $b$ 
receives the outgoing red edge, the incoming blue edges, and the incoming green edges of $v$.
\item
Add a white vertex in the middle of the edge $(s,t)$, and change the color of $u$ from black to white.
\item
Recolor the green edges into red edges.
\item
Color red the two outer edges incident to $t$ and orient these edges toward $t$. Color
blue the two outer edges incident to $s$ and orient these edges toward $s$.
\end{itemize}

\begin{figure}
\centering
\includegraphics[width=15cm]{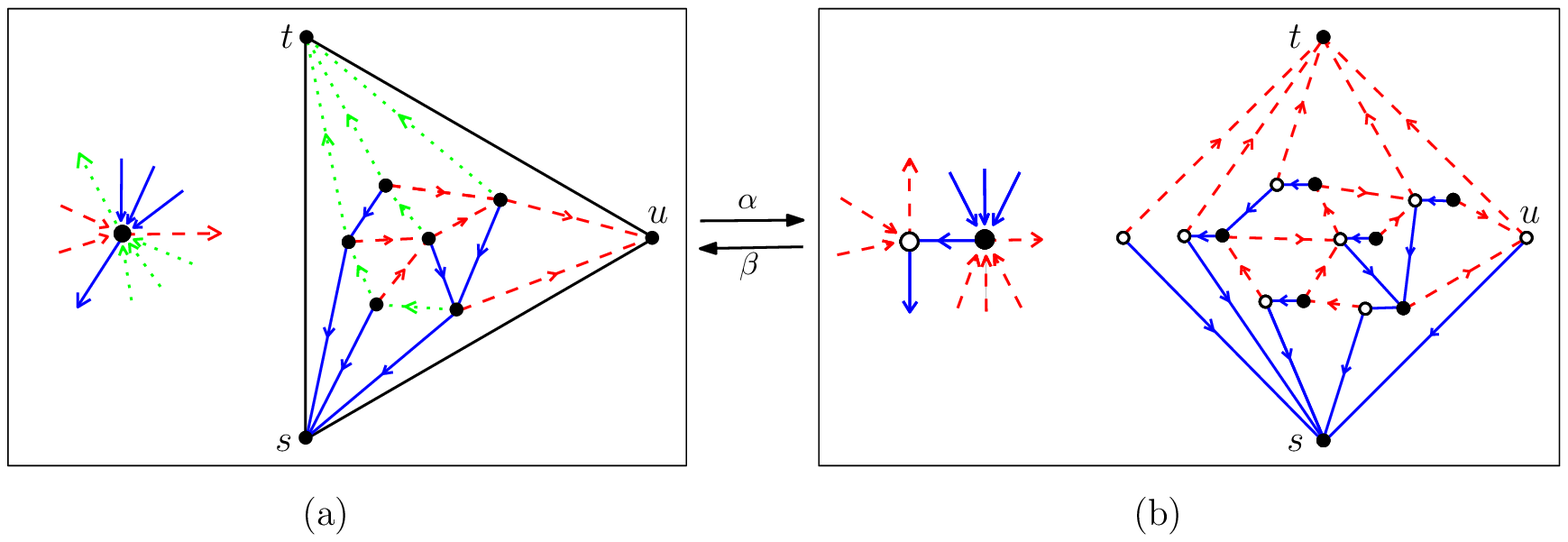}
\caption{From a Schnyder wood to a contractible separating decomposition.}
\label{fig:SchnyderSep}
\end{figure}

Clearly we obtain from this construction a bipartite planar map $Q$ with no multiple edge.
The map $Q$ has 
a quadrangular outer face, $2n$ inner vertices,
and $4n$ inner edges (the $3n$ inner edges of the original triangulation plus the $n$ new edges),
hence $Q$ has to be a maximal bipartite planar map, \ie $Q$ is a quadrangulation.
In addition, it is easily checked that $Q$ is endowed with a separating decomposition $D=\alpha(S)$ via the construction,
as shown in Figure~\ref{fig:SchnyderSep}. 
A separating decomposition 
is called \emph{contractible} if each inner white vertex has blue indegree equal to $1$  and the two outer white 
vertices have blue indegree $0$. Clearly $D=\alpha(S)$ is contractible, see Figure~\ref{fig:SchnyderSep}(b).

Conversely, starting from a contractible separating decomposition $D$, we construct the associated Schnyder
wood $S=\beta(D)$ as follows:
\begin{itemize}
\item
recolor the red edges of $D$ going out of white vertices into green edges.
\item
contract the blue edges going from a black to a white vertex.
\item
remove the colors and directions of the outer edges of $D$; contract into a single edge the path of 
length $2$ going from $s$ to $t$ with the outer face on its left.
\end{itemize}

Clearly, the local conditions of Schnyder woods are satisfies by $S$. Hence, proving that $S$
is a Schnyder wood comes down to proving that the planar map we obtain is a triangulation.
In fact, it is enough to show that all faces are triangular (it is well known that a map with all
faces of degree 3 and endowed with a Schnyder wood has no loop nor multiple edges),
which clearly relies on the following lemma.

\begin{lemma}
Take a contractible separating decomposition $D$ and remove the path of length 2 going
from $s$ to $t$ with the outer face on its left (which yields a separating decomposition with 
one inner face less).
Then around each inner face there is exactly one blue edge
going from a black vertex to a white vertex.
\end{lemma}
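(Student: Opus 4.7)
The plan is to combine a global count of blue edges and of inner faces with a short local cyclic-order analysis at the four vertices of an inner face. The central observation is that, under the contractibility hypothesis and the bipartiteness of $Q$, a blue edge directed from a black to a white vertex is simultaneously the unique outgoing blue at its black endpoint and the unique incoming blue at its white endpoint (the latter existing only at inner whites: outer whites have blue indegree zero by contractibility, and $s,t$ host no such half-edge at all).

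First I would set up the counts. Double-counting blue edges from black to white: each inner black supplies its unique outgoing blue, while each inner white receives a unique incoming blue which, by bipartiteness, must come from a black. Hence $n_b = n_w =: n$ and there are exactly $n$ such edges. Euler's formula on the quadrangulation $Q$ gives $2n+1$ inner faces of $D$, so $2n$ inner faces of $D'$. I would then verify that the removed face carries no blue $b\to w$ edge on its boundary: its boundary is $s - w_1 - t - w^{*} - s$ for some fourth white vertex $w^{*}$, and by the outer rules at $s$ and at $t$ each of its four edges is directed from white to black. Consequently each of the $n$ blue $b\to w$ edges stays interior in $D'$ and borders two inner faces, yielding $\sum_f |\{\text{blue }b\to w\text{ edges on }\partial f\}| = 2n$, which matches the number $2n$ of inner faces of $D'$.

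The decisive step is then to prove that every inner face carries at most one blue $b\to w$ edge; combined with the equality above this forces exactly one per face. Two such edges in the same face cannot share a vertex, by uniqueness of the outgoing blue at a black and of the incoming blue at an inner white, and because $s$, $t$, and the other outer white $w_2$ host neither. So they would have to be opposite edges of the $4$-cycle bounding the face. Label it $b_1, w_1, b_2, w_2$ in clockwise order and suppose $(b_1, w_1)$ and $(b_2, w_2)$ are both blue $b\to w$. At $w_1$ the face edge $(w_1, b_1)$ is the unique incoming blue of $w_1$; in the clockwise cyclic order at a white vertex this incoming blue is immediately preceded by the outgoing red, so the other face edge $(w_1, b_2)$ must be the outgoing red of $w_1$, directed $w_1 \to b_2$ with color red. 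But at $b_2$, having $(b_2, w_2)$ as outgoing blue forces the edge clockwise-following it, namely $(b_2, w_1)$, to be either the first incoming blue of $b_2$ or the outgoing red of $b_2$, i.e.\ directed $w_1 \to b_2$ in blue or $b_2 \to w_1$ in red; neither matches the direction and color just derived, a contradiction. The symmetric other opposite pair $(w_1, b_2), (w_2, b_1)$ is ruled out in exactly the same way.

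The hard part will be the cyclic-order bookkeeping: the rule ``incoming follows outgoing of the same color'' holds clockwise at black vertices but \emph{counter}clockwise at white vertices, so the list of clockwise-consecutive pairs that can appear as the two face edges at a given vertex differs at the two vertex types, and one must track carefully which face edge is clockwise-before which along the clockwise walk around the face. Once this bookkeeping is in place the local contradictions above are immediate, and combining the at-most-one-per-face bound with the matching global count $2n$ gives exactly one blue $b\to w$ edge on each inner face, proving the lemma.
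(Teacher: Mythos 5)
Your argument is correct, but it takes a genuinely different route from the paper's. The paper does no global counting at all: it passes to the plane bipolar orientation $O$ associated with $D$, identifies each inner face of $D$ with the edge $e$ of $O$ in its interior, and uses contractibility (blue indegree $1$ at the white vertex $w_{\ell}$ of the left face $\ell_e$) to force the right side of $\ell_e$ to consist of exactly two edges; this pins down explicitly which of the four edges of the quadrangle is the blue black-to-white one (the edge from the middle vertex of that right side to $w_{\ell}$) and checks directly that the other three are not. You instead stay entirely inside the separating-decomposition axioms: exactly $n$ blue $b\to w$ edges, each bordering two of the $2n$ inner faces of the reduced map, plus an at-most-one-per-face bound, so the average of one per face becomes an exact count. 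The paper's approach buys an explicit identification of the distinguished edge (which is what the contraction map $\beta$ actually exploits) at the price of invoking the correspondence with bipolar orientations; yours buys self-containedness and a clean double-counting structure at the price of the local cyclic-order case analysis. On that last point, the bookkeeping you defer does close: with the face labelled clockwise (face on the right of the walk), the second face edge at $w_1$ is the one immediately \emph{preceding} the incoming blue in clockwise order, hence the outgoing red, exactly as you assert, and the contradiction at $b_2$ follows. Be aware, though, that the other opposite pair is not ruled out ``in exactly the same way'': there the second face edge at $w_1$ immediately \emph{follows} its incoming blue, so it is the outgoing \emph{blue} of $w_1$, and the contradiction at the other black vertex is ``must be red but was derived blue'' rather than a direction-and-colour mismatch. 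It is a routine variation, but it should be written out, since the asymmetry between the two cases is precisely the clockwise-versus-counterclockwise asymmetry of the local rules that your whole third step turns on.
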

\begin{proof}
Let $O$ be the plane bipolar orientation associated to $D$. Observe that $s$ and $t$ are adjacent
in $O$, the edge $(s,t)$ having the outer face of $O$ on its left. To each inner face $f$ of $O$ 
corresponds the unique edge $e$ of $O$ that is in the interior of $f$. For each edge $e$ of $O$,
except for $(s,t)$, let $\ell_e$ ($r_e$) be the face of $O$ on the left (right, \resp) of $e$,
and let $w_{\ell}$ ($w_r$, \resp) be the corresponding white vertex on $D$. 
Notice that the inner face of $D$ associated with $e$ is the face $f$ incident to the extremities of $e$
and to the white vertices $w_{\ell}, w_r$.
As $w_{\ell}$ has
blue indegree $1$, $\ell_e$ has two edges on its right side. Hence, one extremity $v$ of $e$
is extremal for $\ell_e$, and the other extremity $v'$ of $e$ is in the middle of the right side of $\ell_e$.
Hence the edge $(v',w_{\ell})$, which is on the contour of $f$, is a blue edge with a black origin.
In addition, the edge $(w_{\ell},v)$ goes into $v$ (as $v$ is extremal for $\ell_e$), and each of the other two edges of $f$ is  either red or is blue with $w_r$ as origin, by the rules to translate a plane bipolar orientation into
a separating decomposition.  Hence any inner face of $D$, except the one corresponding to $(s,t)$,
has on its contour exactly one blue edge with a black origin.
\end{proof}

Clearly the mappings $\alpha$ and $\beta$ are mutually inverse, so that we obtain the following
result (which to our knowledge is new):

\begin{proposition}\label{prop:schny_dec}
Schnyder woods with $n$ inner vertices are in bijection with contractible separating decompositions
with $n$ black inner vertices.
\end{proposition}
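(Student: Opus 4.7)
The plan is to verify that the mappings $\alpha$ and $\beta$ defined just above the statement are well-defined and mutually inverse; this gives the bijection directly. There are really three things to check: (i) $\alpha(S)$ is always a contractible separating decomposition, (ii) $\beta(D)$ is always a Schnyder wood, and (iii) $\beta\circ\alpha=\mathrm{id}$ and $\alpha\circ\beta=\mathrm{id}$. The parameter count matches automatically: each of the $n$ inner vertices of the triangulation is split into a pair (white, black), so $\alpha(S)$ has exactly $n$ black inner vertices (the outer vertex $u$ is recolored white and does not contribute).

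First I would check (i). The bipartite planar map $Q=\alpha(S)$ obtained from the splitting construction has no multiple edge, has a quadrangular outer face, $2n$ inner vertices and $4n$ inner edges, hence Euler's formula forces all inner faces to be quadrangles (equivalently, $Q$ is a maximal bipartite planar map). The local separating-decomposition conditions at every inner vertex must then be read off from the Schnyder local conditions at the corresponding vertex $v$: at the white copy $w$ we see the outgoing blue edge to the blue-tree parent, the outgoing (recolored) red edge replacing the outgoing green, and the incoming red edges in between, all in ccw order; at the black copy $b$ we see the outgoing red edge, the outgoing blue edge to $w$, and the incoming blue and green-recolored-red edges in cw order. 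The conditions at $s$ and $t$ follow from the explicit recoloring of the outer edges. Finally, each inner white vertex $w$ has blue indegree exactly $1$ (the edge from its black partner $b$), and the two outer white vertices (the midpoint of $(s,t)$ and the recolored $u$) have blue indegree $0$, so $\alpha(S)$ is contractible.

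Next I would check (ii). Starting from a contractible separating decomposition $D$ and applying $\beta$, the only non-trivial point is that the resulting planar map is a triangulation: this is precisely the content of the lemma just stated, which guarantees that each inner face of $D$ other than the $(s,t)$-face carries exactly one blue edge from a black to a white vertex, so contracting all such edges produces exactly one triangle per inner face of $D$. The Schnyder local conditions at each inner vertex $v$ of the contracted map follow by combining the conditions around the black vertex $b$ and the white vertex $w$ that $v$ comes from: the outgoing red of $b$, outgoing blue of $w$, and outgoing (recolored) green of $w$ appear in cw order around $v$, and the incoming edges of each color are correctly sandwiched because the separating-decomposition rule places incoming edges of one color between the outgoing edges of the other two around both $b$ and $w$. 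The outer-vertex conditions on $S$ come from the bookkeeping of the contraction of the length-$2$ path between $s$ and $t$.

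Finally (iii) is essentially tautological: in $\alpha$ every inner vertex is split via a blue black-to-white edge, and in $\beta$ exactly these blue edges are contracted; the recoloring green$\leftrightarrow$red on the edges that $\beta$ contracts is inverse to the recoloring done by $\alpha$. So $\beta\circ\alpha$ restores the original triangulation with its original colors, and symmetrically for $\alpha\circ\beta$. The main obstacle is step (ii): one must carefully trace the cyclic order of the incident edges at both $b$ and $w$ through the contraction and the red/green recoloring to verify the full Schnyder condition (three outgoing edges in cw order, with incoming edges of one color between outgoing edges of the other two). Once this combinatorial verification is done, together with Proposition~\ref{theo:bi} the bijection is established.
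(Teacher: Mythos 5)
Your proposal is correct and follows essentially the same route as the paper: verify that $\alpha$ produces a contractible separating decomposition (using the edge/vertex count to force the quadrangulation property), verify that $\beta$ produces a Schnyder wood by invoking the stated lemma on blue black-to-white edges to get triangularity, and observe that the two maps are mutually inverse because $\beta$ contracts exactly the blue edges that $\alpha$ inserts. The paper leaves most of these local-condition checks as assertions, so your more explicit verification of steps (i)--(iii) is a faithful (and slightly more detailed) rendering of the same argument.
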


\begin{figure}
\centering
\includegraphics[width=15cm]{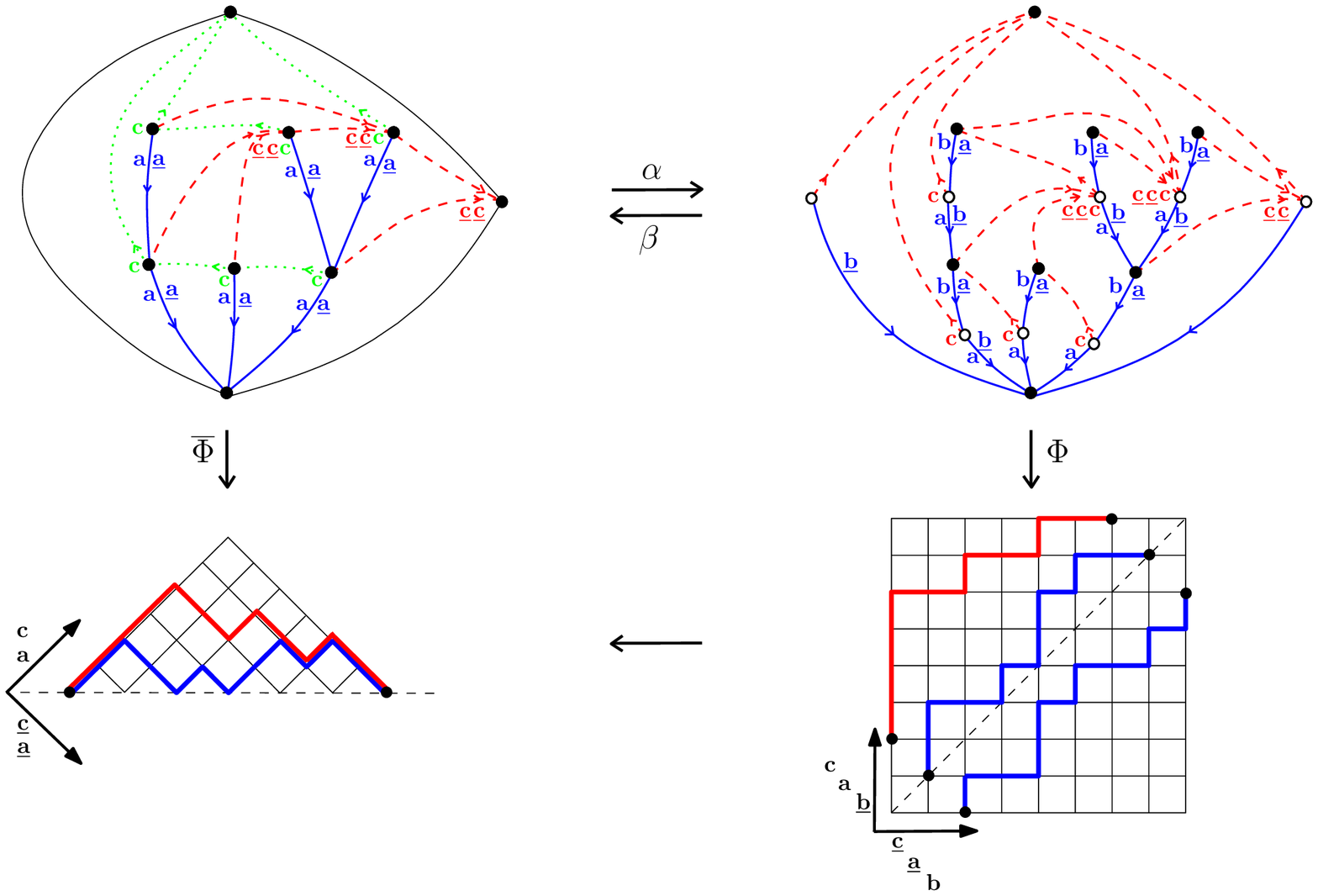}
\caption{Encoding a Schnyder wood by two non-crossing Dyck paths via the associated 
contractible separating decomposition.}
\label{fig:encodeSchnyderSep}
\end{figure}

%Proposition~\ref{prop:schny_dec} ensures that  Schnyder woods with $n$ inner vertices are in bijection
%with the non-intersecting triples of paths  that encode 
 %contractible separating decomposition with $n$ inner black vertices. 
 
 Let us now describe the non-intersecting triples of paths associated
 with  contractible separating
 decompositions. 
 Let  $D$ be  
  a contractible separating decomposition with $2n$ inner vertices, 
  and let $(P_a',P_b',P_c')=\Phi(D)$ be 
  the associated non-intersecting triple of paths, which has type $(n,n)$. 
  Let $\Tb$ be the blue tree of $D$. Observe that $\Tb$ has one 1-leg on the left and on the right side and all  the other white vertices have exactly one child.
  Let $T$ be the tree obtained from $\Tb$ by deleting the 1-legs on each side and by 
  merging each white vertex with its unique black child. 
  Then it is easily checked that $P_a'$ is the 
 Dyck path encoding $T$. In addition, $P_b'$ is
 redundant: it is obtained as the mirror of  $P_a'$ \wrt the diagonal $x=y$,  shifted
 one step to the right, and with the last (up) step moved so as to prepend the path,
 see Figure~\ref{fig:encodeSchnyderSep} (right part).  
Finally the path $P_c'$ is also a Dyck path, since it does 
 not intersect $P_a'$ and its respective endpoints are one step up-left of the 
 corresponding endpoints of $P_a'$.  
  To have a more classical representation, one rotates cw by 45 degrees the two paths $P_c'$ and $P_a'$ and shifts them to have the same starting point (and same endpoint),
  see Figure~\ref{fig:encodeSchnyderSep} (lower part). After doing this, the pair 
  $(P_a',P_c')$ is 
  a non-crossing pair of  Dyck paths (each of length $2n$) that is enough to encode the separating decomposition.
  
  Conversely, starting from a pair $(P_a',P_c')$ of non-crossing Dyck paths, we rotate the two paths
  ccw by 45 degrees and shift $P_c'$ one step up-left, so that $P_c'$ now does not intersect $P_a'$. 
  Then we construct the path $P_b'$ as the 
  mirror of $P_a'$ according to the diagonal $x=y$, with  the last step moved to the start of the path, and we place $P_b'$ 
  so as to have its starting point  one step bottom-right of the starting point of $P_a'$. As $P_a'$
  is a Dyck word (\ie stays weakly above the diagonal $x=y$), the path $P_b'$ does not intersect $P_a'$. Furthermore it is easily checked that the blue tree $\Tb$ of the separating decomposition  $D=\Psi(P_b',P_a',P_c')$ has one 1-leg on each side and all other white vertices have one child in $\Tb$.
(Proof: by definition of $\Psi$, the Dyck path $P$ for $\Tb$ is obtained as a shuffle at even and odd positions of the path $P_a:=\uparrow P_a'\uparrow$ and of the path
$P_b:=P_b'\uparrow\uparrow$. By construction of $P_b'$ from $P_a'$, it is
easily checked that there is a $\wedge$ at the beginning---starting at position 0---and at the end of $P$ and that all the other peaks and valleys of $P$ start at odd position, hence the corresponding leaves and forks of $T$ are at black vertices only.)

To conclude, we have proved that contractible separating decompositions with $n$ 
inner vertices
are encoded (via the bijection $\Phi$) by non-crossing pairs of Dyck paths each having
$2n$ steps. Given 
Proposition~\ref{prop:schny_dec}, we recover Bonichon's result~\cite{B02}: 

\begin{theorem}\label{theo:sch}
Schnyder woods on triangulations with $n$ inner vertices are in bijection with non-crossing pairs of Dyck paths that have both $2n$ steps.
\end{theorem}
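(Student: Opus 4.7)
The plan is to obtain Theorem~\ref{theo:sch} as the composition of two bijections already established in the paper: first Proposition~\ref{prop:schny_dec}, which identifies Schnyder woods on triangulations with $n$ inner vertices with contractible separating decompositions having $n$ black inner vertices (hence $n+2$ black and $n+2$ white vertices in total), and then the specialization of the main bijection $\Phi$ of Theorem~\ref{theo:bijection} to the contractible case. The non-intersecting triple $(\cP_b',\cP_a',\cP_c')=\Phi(D)$ produced by a contractible $D$ has type $(n,n)$, so each path goes from a point near the origin to a point near $(n,n)$; the goal is to show that in this special case the triple collapses to a \emph{non-crossing pair of Dyck paths}, and conversely that every non-crossing pair arises this way.

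First I would exploit the structural consequence of contractibility on the blue tree $\Tb$: each inner white vertex has blue indegree $1$, so it has exactly one (black) child, while each of the two outer white vertices is a $1$-leg. Contracting the white vertices into their parents yields a plane tree $T$ on the $n+1$ black vertices (minus the sink), whose Dyck encoding is precisely $\cP_a'$: this shows $\cP_a'$ is a Dyck path of length $2n$. Next, the same structural property forces $W_b$ to be essentially redundant given $W_a$ — the positions of the $b$'s in the interleaving $\Wt$ must sit exactly at the corners of the white-parent configurations — which, after removing the two trailing $\ub$'s and translating through the lattice-path picture, identifies $\cP_b'$ with the mirror of $\cP_a'$ across the diagonal, shifted one step to the right, with its last up-step moved to the front. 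Finally, since $\cP_c'$ ends at $(n+1,n-1)$, is weakly above-left of $\cP_a'$ by Property~\ref{property2}, and $\cP_a'$ itself is a Dyck path, a direct diagonal-comparison argument shows $\cP_c'$ is also Dyck; after rotating both paths by $45^{\circ}$ clockwise and aligning endpoints, one gets the announced non-crossing pair of Dyck paths, each of length $2n$.

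For the converse, start from a non-crossing pair $(\cP_a',\cP_c')$ of Dyck paths of length $2n$, undo the $45^{\circ}$ rotation, shift $\cP_c'$ one step up-left so it becomes non-intersecting with $\cP_a'$, and construct $\cP_b'$ as the shifted mirror of $\cP_a'$ as above; the Dyck property of $\cP_a'$ guarantees that $\cP_b'$ does not intersect $\cP_a'$, so $(\cP_b',\cP_a',\cP_c')$ is a non-intersecting triple of type $(n,n)$. Applying $\Psi$ produces a separating decomposition $D$, and what remains is to check that $D$ is contractible, \ie that the blue tree $\Tb=\Psi(\cP_b',\cP_a',\cP_c')$ has a $1$-leg on each outer side and that every other white vertex has exactly one child. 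This reduces to a combinatorial statement about the peaks and valleys of the interleaved Dyck word $\Wt$ built from $\cP_a$ and $\cP_b$: because $\cP_b$ is the shifted mirror of $\cP_a$, one can check that in $\Wt$ the initial and final patterns $a\ub$ produce the two outer $1$-legs, and that every internal peak/valley of $\Wt$ occurs at an odd position, so the corresponding leaves and forks of $\Tb$ sit at black vertices only — exactly the contractibility condition.

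The main technical obstacle is this last verification — identifying ``$\cP_b'$ is the shifted mirror of $\cP_a'$'' with ``$\Tb$ has each inner white vertex of outdegree one and both outer white vertices of outdegree zero''. This is essentially a bookkeeping check, but it requires carefully tracking how the interleaving rule of Section~\ref{sec:treeWord} places the $b/\ub$ letters of $W_b$ at the odd positions of $\Wt$, and how peaks and valleys of $\Wt$ translate into leaves and forks of $\Tb$. Once this is done, combining Proposition~\ref{prop:schny_dec} with the specialized $\Phi$ yields the bijection claimed in Theorem~\ref{theo:sch}, and the enumeration formula $S_n=C_nC_{n+2}-C_{n+1}^2$ follows from Lindström--Gessel--Viennot applied to pairs of Dyck paths.
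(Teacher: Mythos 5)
Your proposal is correct and follows essentially the same route as the paper: compose Proposition~\ref{prop:schny_dec} with the specialization of $\Phi$ to contractible separating decompositions, observe that contractibility forces every inner white vertex of $\Tb$ to have a unique black child (making $\cP_a'$ the Dyck path of the contracted tree, $\cP_b'$ the redundant shifted mirror, and $\cP_c'$ a second Dyck path), and verify the converse by checking that the peaks and valleys of the interleaved word $\Wt$ fall at odd positions so that $\Psi$ returns a contractible decomposition. The remaining work you flag as "bookkeeping" is exactly the check the paper carries out, so no genuinely different ideas are involved.
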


As shown in Figure~\ref{fig:encodeSchnyderSep}, 
the bijection can be formulated as a mapping $\overline{\Phi}$
operating directly on the Schnyder wood $S$. Indeed, let $D=\alpha(S)$.
The blue tree $T$ of $S$ is equal to the tree $\Tb$ of $D$ where the 1-legs on each
side are deleted and where each white node is merged with its unique black child.
Hence the path $P_a'$ (the lower Dyck path) 
associated with $D$ is the Dyck path encoding the blue 
tree $T$ of $S$. 
And the upper Dyck path $P_c'$
can be read directly on $S$: $P_c'$ is obtained by walking cw around
$T$, 
drawing an up-step (down-step) each time an outgoing green edge (incoming red edge, \resp) is crossed, and completing the end of the path by down-steps. 
This mapping is  exactly  the bijection that has been recently described by Bernardi and Bonichon~\cite{BeBo07} for counting Schnyder woods (and more generally for counting some intervals of Dyck paths), 
which itself is a reformulation of Bonichon's original construction~\cite{B02}.

\bibliographystyle{plain}
\bibliography{mabiblio}  
\end{document}